\newtheorem{theorem}{Theorem}[section]
\newtheorem{proposition}[theorem]{Proposition}
\newtheorem{corollary}[theorem]{Corollary}
\theoremstyle{definition}
\newtheorem{notation}[theorem]{Notation}
\numberwithin{equation}{section}
\newcommand{\diag}{\operatorname{diag}}
\newcommand{\Lk}{\operatorname{Lk}}
\newcommand{\B}{\operatorname{{\mathcal B}}}
\newcommand{\Q}{\operatorname{{\mathbb Q}}}
\newcommand{\Z}{\operatorname{{\mathbb Z}}}
\newcommand{\D}{\operatorname{{\mathcal D}}}
\begin{document}

\title[Middle tunnels]
{Middle tunnels by splitting}

\author{Sangbum Cho}
\address{Department of Mathematics Education\\
Hanyang University\\
Seoul 133-791\\
Korea}
\email{scho@hanyang.ac.kr}

\author{Darryl McCullough}
\address{Department of Mathematics\\
University of Oklahoma\\
Norman, Oklahoma 73019\\
USA}
\email{dmccullough@math.ou.edu}
\urladdr{www.math.ou.edu/$_{\widetilde{\phantom{n}}}$dmccullough/}
\thanks{The second author was supported in part by NSF grant DMS-0802424}

\subjclass{Primary 57M25}

\date{\today}

\keywords{knot, tunnel, (1,1), torus knot}

\begin{abstract} For a genus-1 1-bridge knot in $S^3$, that is, a
$(1,1)$-knot, a middle tunnel is a tunnel that is not an upper or lower
tunnel for some $(1,1)$-position. Most torus knots have a middle tunnel,
and non-torus-knot examples were obtained by Goda, Hayashi, and
Ishihara. We generalize their construction and calculate the slope
invariants for the resulting middle tunnels. In particular, we obtain the
slope sequence of the original example of Goda, Hayashi, and Ishihara.
\end{abstract}

\maketitle

\section*{Introduction}
\label{sec:intro}

Genus-$2$ Heegaard splittings of the exteriors of knots in $S^3$ have been
a topic of considerable interest for several decades. They form a class
large enough to exhibit rich and interesting geometric behavior, but
restricted enough to be tractable. Traditionally such splittings are
discussed using the language of knot tunnels, which we will use from now
on.

The article \cite{CMtree} developed two sets of invariants that together
give a complete classification of all tunnels of all tunnel number 1
knots. One is a finite sequence of rational ``slope'' invariants, and the
other is a finite sequence of binary invariants. The latter sequence is
trivial exactly when the tunnel is a so-called $(1,1)$-tunnel, that is, a
tunnel that arises as the ``upper" or ``lower" tunnel of a genus-$1$
$1$-bridge position of the knot. In the language of \cite{CMtree}, the
$(1,1)$-tunnels are called semisimple, except for those which occur as the
upper and lower tunnels of a $2$-bridge knot and are called simple. The
tunnels which are not $(1,1)$-tunnels are called regular.

For quite a long time, the only known examples of knots having both regular
and $(1,1)$-tunnels were (most) torus knots, whose tunnels were classified
by M. Boileau, M. Rost, and H. Zieschang~\cite{B-R-Z} and independently by
Y. Moriah~\cite{Moriah}. Recently, another example was found by H. Goda and
C. Hayashi~\cite{Goda-Hayashi}. The knot is the Morimoto-Sakuma-Yokota
$(5,7,2)$-knot, and Goda and Hayashi credit H.\ Song with bringing it to
their attention. Like the torus knots, it has a $(1,1)$-position with two
associated semisimple tunnels, and a third ``middle'' tunnel which is
regular. A tunnel arc for the regular tunnel is shown in
Figure~\ref{fig:goda-hayashi}.
\begin{figure}\begin{center}
\includegraphics[width=40 ex]{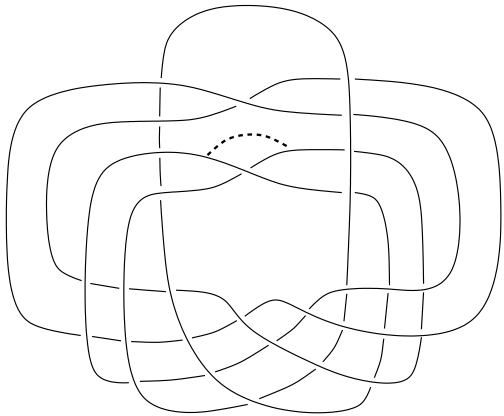}
\caption{The Morimoto-Sakuma-Yokota $(5,7,2)$-knot.
\newline The dotted line is a tunnel arc for its regular tunnel.}
\label{fig:goda-hayashi}
\end{center}
\end{figure}

A preliminary version of \cite{Goda-Hayashi} contained a gap in the
verification that the latter tunnel is not a $(1,1)$-tunnel: the authors
relied on Proposition~1.3 of (the nonetheless useful and important work)
\cite{Morimoto-Sakuma}, which turns out to be erroneous. As noted
in~\cite{Goda-Hayashi}, K. Ishihara~\cite{Ishihara} developed an algorithm
to compute the slope invariants of a tunnel using manipulation of families of
compressing disks in the associated Heegaard splitting, and successfully
applied it to compute the sequence of binary invariants of the tunnel,
sufficient to complete the proof that it is regular. In view of this, we
will refer to this example as the Goda-Hayashi-Ishihara tunnel. As noted
in~\cite{Goda-Hayashi}, a simple modification of their construction,
varying a nonzero integer parameter $n$, produces an infinite collection of
very similar examples.

In this paper, we analyze a general construction that produces all examples
directly obtainable by the geometric phenomenon that underlies the
Goda-Hayashi-Ishihara example. Moreover, we give an effective method to
compute the full set of slope invariants of any of these examples. We
illustrate it by computing the slope invariants of the
Goda-Hayashi-Ishihara example, and the binary invariants as well, verifying
Ishihara's calculation.

Here is a knot-theoretic description of the examples. As seen in
Figure~\ref{fig:GHI} below, the Morimoto-Sakuma-Yokota $(5,7,2)$-knot is
the band sum of two torus knots $T_{3, -4}$ and $T_{2, -3}$ lying in
concentric tori, by a (half-twisted) band running vertically between the
tori, with the tunnel represented by an arc cutting across the band. The
general example is an (arbitrarily twisted) band sum of two concentric
torus knots $T_{p+r, q+s}$ and $T_{r, s}$ (for certain allowable
combinations of $p$, $q$, $r$, and $s$). As we will see, in terms of our
theory this tunnel is obtained by a cabling construction starting from the
middle tunnel of the torus knot $T_{p+r, q+s}$.

For calculations, we need a very precise description. The general
construction, detailed in Section~\ref{sec:splitting} after preliminary
work in Sections~\ref{sec:torus_knots} and~\ref{sec:drop-lift}, is called
the splitting construction. There are four versions of it; each starting
with a so-called middle tunnel of a torus knot $K$, whose sequences of
invariants were calculated in~\cite{CMtorus}. Start with a torus knot $K$
contained in a standard torus $T$ in $S^3$, together with an arc in $T$
representing the middle tunnel of $K$. Regard $T$ as one level of a product
region $T\times I$. A tubular neighborhood of $K$, together with a
$1$-handle determined by the middle tunnel, is a genus-$2$ handlebody $H$
positioned ``horizontally'' in $T\times I$. Section~\ref{sec:drop-lift}
describes four disks, called the drop-$\rho$, lift-$\rho$, drop-$\lambda$,
and lift-$\lambda$ disks, and an isotopy that ``splits off" and either
``drops" or ``lifts" a solid torus from $H$. The solid torus is a
neighborhood of a certain torus knot $K'$ in another level of $T\times
I$. Inserting a disk called $\gamma_n$ into $H$, in a certain way, is a
cabling construction~\cite{CMtree} that produces the new tunnel (provided
that $n\neq 0$). Its associated knot is the sum of $K$ and $K'$, connected
by two vertical arcs in $T\times I$ positioned with $n$ half-twists. In
Section~\ref{sec:GHI_example} we give explicit versions of the splitting
construction that produce the Goda-Hayashi-Ishihara example and its mirror
image.

From the precise description, it is easy to read off the binary invariant
of this cabling construction. For the slope invariant, we set up a general
method in Sections~\ref{sec:first_general_slope_calculation}
and~\ref{sec:second_general_slope_calculation}. Besides adding the
transparency of abstraction, the setup will be used in~\cite{CMiterated} to
calculate the slope invariants obtained by an iteration of the splitting
construction, which we will discuss momentarily.
Section~\ref{sec:splitting_construction_slopes} uses the general method to
give the slopes in all cases of the splitting construction, and
Section~\ref{sec:GHI_calculation} illustrates them for the
Goda-Hayashi-Ishihara example.

Each tunnel obtained by the splitting construction is associated to a
$(1,1)$-position of its associated knot, and in
Section~\ref{sec:uuperlower} we explain how the method of~\cite{CMsemisimple}
allows an easy calculation of the slope invariants of its upper and lower
tunnels. As usual, we apply these to the Goda-Hayashi-Ishihara example.

We mentioned a further generalization of the splitting
construction. In~\cite{CMiterated}, we show how one can start with a tunnel
obtained by a splitting construction and carry out an iteration of similar
constructions, producing a much larger class of knots having both regular
and semisimple tunnels. Each of the four splitting constructions admits two
kinds of iteration sequences, giving eight versions of the iterated
construction. As with the splitting constructions, which allows variation
by any nonzero choice of $n$, each cabling in an iterated sequence can be
varied by a nonzero integer, producing an enormous number of possible
examples. Rather surprisingly to the authors, the setup of
Sections~\ref{sec:first_general_slope_calculation}
and~\ref{sec:second_general_slope_calculation} allows one to calculate the
slopes of all the cablings in the iterated construction.

We have already described most of the content of the paper, apart from the
first section below which establishes notation and reviews the method
from~\cite{CMtorus} for calculating the invariants of the middle tunnels of
torus knots. We have not included a review of the general theory, as the
original theory is detailed in~\cite{CMtree} and brief reviews are already
available in several of our articles. For the present paper, we would guess
that Section~1 of~\cite{CMgiant_steps} together with the review sections
of~\cite{CMsemisimple} form the best option for most readers.

\bigskip

\section{Middle tunnels of torus knots}
\label{sec:torus_knots}

Figure~\ref{fig:torus_knot_notation} shows a standard Heegaard torus $T$ in $S^3$,
and an oriented longitude-meridian pair $\{\ell,m\}$ which will be our
ordered basis for $H_1(T)$ and for the homology of a product neighborhood
$T\times I$. For a relatively prime pair of integers $(p,q)$, we denote by
$T_{p,q}$ a torus knot isotopic to a $(p,q)$-curve in $T$. In particular,
$\ell=T_{1,0}$ and $m=T_{0,1}$, also $T_{p,q}$ is isotopic in $S^3$ to
$T_{q,p}$, and $T_{-p,-q}=T_{p,q}$ since our knots are unoriented.
Figure~\ref{fig:torus_knot_notation} shows the knot~$T_{3,5}$.
\begin{figure}
\begin{center}
\labellist
\pinlabel $m$ at 152 55
\pinlabel $\ell$ at 190 55
\pinlabel \Large $T$ at 0 140
\small\pinlabel $T_{3,5}$ at 212 27
\endlabellist
\includegraphics[width=0.5\textwidth]{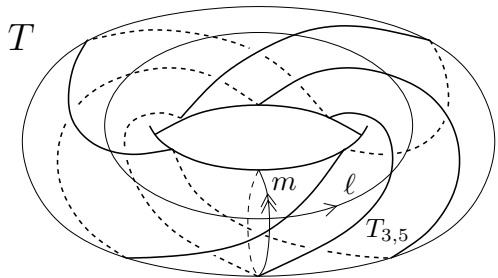}
\caption{$m$, $\ell$, and $T_{3,5}$.}
\label{fig:torus_knot_notation}
\end{center}
\end{figure}

\bigskip

We will sometimes but not always restrict attention to \textit{normalized}
torus knots, that is, to $T_{p,q}$ with $p> q\geq 2$. When allowing trivial
knots, we include $T_{n,1}$, $n\geq 1$, and $T_{1,0}$ as normalized torus
knots.

Any of our torus knot constructions or calculations can be reduced to this
case. To understand why, consider a product neighborhood $T\times [-1,1]$
of $T=T\times\{0\}$. There is an isotopy of $S^3$ that takes $T\times
\{s\}$ to $T\times \{-s\}$, interchanges $\ell$ and $m$, and moves
$T_{p,q}$ to $T_{q,p}$. Allowing such isotopies, we may always assume that
$|p|\geq |q|$. Since $T_{p,q}=T_{-p,-q}$, we may always assume further that
$p>0$, and if still $q<0$, we may apply a reflection of $S^3$ preserving
$T$ and taking $m$ to $-m$ and $\ell$ to $\ell$, so $T_{p,q}$ and
$T_{p,-q}$ are mirror images. The reflection
multiplies each slope invariant by $-1$.
As we will point out along the way, however, our
constructions and algebraic procedures always work, sometimes with some
simple modifications, for unnormalized torus knots.

We briefly recall the iterative construction of middle tunnels of torus
knots detailed in~\cite{CMtorus}, adapting the notation somewhat to suit
our current purposes. Figure~\ref{fig:drop-lambda_slope}(a) shows the
middle tunnel disk $\tau$ of a torus knot $K_\tau=T_{p+r,q+s}$
(in~\cite{CMtorus}, $p=p_1$, $r=p_2$, $q=q_1$, and $s=q_2$). Also seen
are the disks $\rho$ and $\lambda$ of the principal pair of $\tau$, whose
associated knots $K_\rho$ and $K_\lambda$ are torus knots $T_{p,q}$ and
$T_{r,s}$ respectively. Figure~\ref{fig:drop-lambda_slope}(a) shows the slope-$0$ separating
disk $\rho^0$ used to define $(\rho,\rho^0)$-coordinates. In general,
$\rho^0$ makes $(q+s)r$ turns around the handlebody, as indicated in the
drawing for the case $(q+s)r=2$.

Figure~\ref{fig:drop-lambda_slope}(b) shows a tunnel disk $\tau_U$, which
is obtained from $\tau$ by a cabling construction replacing $\rho$. It
meets $\rho$ in a single arc, and is disjoint from $\lambda$. As detailed
in~\cite{CMtorus}, and we hope is geometrically evident from
Figure~\ref{fig:drop-lambda_slope}(b), $\tau_U$ is the middle tunnel of
$T_{p+2r,q+2s}$. Figure~\ref{fig:drop-lambda_slope}(c) shows a similar
disk $\tau_L$ which is the middle tunnel for $T_{2p+r,2q+s}$, and is
obtained from $\tau$ by a cabling construction replacing $\lambda$. It
meets $\lambda$ in a single arc and is disjoint from~$\rho$.
\begin{figure}
\begin{center}
\labellist
\pinlabel \large (a) at 70 -5
\pinlabel $\rho$ at 110 200
\pinlabel $\lambda$ at 17 132
\pinlabel $\tau$ at 104 130
\pinlabel $\rho^0$ at 37 95
\pinlabel $\lambda$ at 116 78
\pinlabel $\rho$ at 25 10
\pinlabel $(q+s)r$ at 86 21
\pinlabel \large (b) at 226 -5
\pinlabel $\rho$ at 263 200
\pinlabel $\lambda$ at 170 132
\pinlabel $\tau$ at 256 130
\pinlabel $\tau_U$ at 192 91
\pinlabel $\lambda$ at 270 75
\pinlabel $\rho$ at 175 5
\pinlabel \large (c) at 380 -5
\pinlabel $\rho$ at 418 200
\pinlabel $\lambda$ at 323 131
\pinlabel $\tau$ at 409 127
\pinlabel $\tau_L$ at 340 63
\pinlabel $\lambda$ at 421 75
\pinlabel $\rho$ at 329 6
\endlabellist
\includegraphics[width=\textwidth]{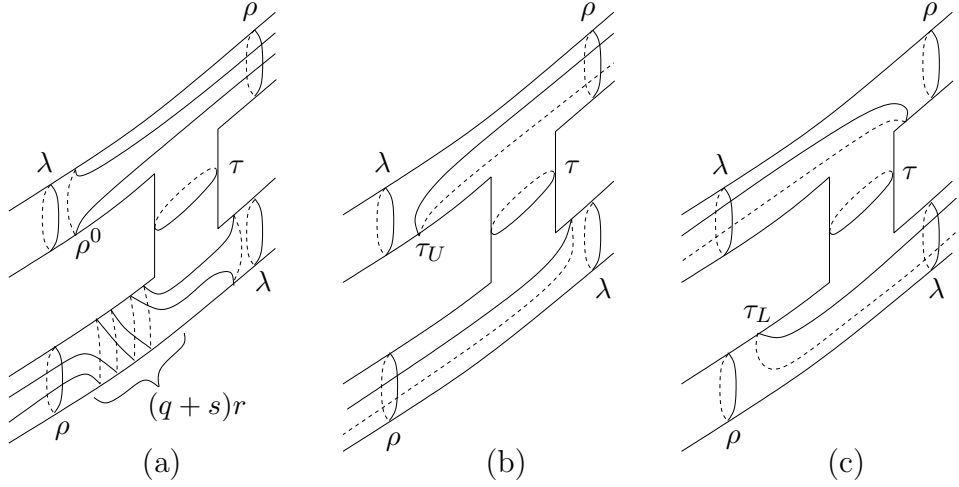}
\caption{The disks $\tau$, $\lambda$, $\rho$, $\rho^0$, $\tau_U$, and $\tau_L$.}
\label{fig:drop-lambda_slope}
\end{center}
\end{figure}

The notations here indicate the underlying algebra. Assume that $T_{p+r,q+s}$
is normalized, with $p+r > q+s   \geq 2$; as mentioned above, all other cases
can be reduced to this one. Write $(p+r)/(q+s)$ as a continued fraction
$[n_1,\ldots,n_k]$ with all $n_i$ positive.  Write
$U=\begin{pmatrix}1&1\\0&1\end{pmatrix}$ and
$L=\begin{pmatrix}1&0\\1&1\end{pmatrix}$. We call the matrix
\[M_{p+r,q+s}=(U\text{ or }L)^{n_k-1}\cdots U^{n_2}L^{n_1} =
\begin{pmatrix}p&q\\r&s\end{pmatrix}\]
the matrix \textit{associated to $T_{p+r,q+s}$.} As seen in \cite{CMtorus},
the knots $K_\rho$ and $K_\lambda$ are $T_{p,q}$ and $T_{r,s}$
respectively.

The associated matrix of
$T_{p+2r,q+2s}$ is
\[M_{p+2r,q+2s}=\begin{pmatrix}p+r&q+s\\r&s\end{pmatrix}=UM_{p+r,q+s}\ .\]
The principal pair of $\tau_U$ is $\{\lambda,\tau\}$, and passing from
$\tau$ to $\tau_U$ is a cabling construction which we call the
\textit{$U$-construction.} Similarly,
the associated matrix of $T_{2p+r,2q+s}$ is
\[M_{2p+r,2q+s}=\begin{pmatrix}p&q\\p+r&q+s\end{pmatrix}=LM_{p+r,q+s}\ ,\]
$\tau_L$ has principal pair
$\{\rho,\tau\}$ and is produced by the \textit{$L$-construction.}

As detailed in \cite{CMtorus}, a sequence of $U$- and $L$-constructions
producing the middle tunnel of $T_{p+r,q+s}$ (that is, the unique sequence
of cabling constructions producing the middle tunnel of $T_{p+r,q+s}$)
can be obtained as follows.
\begin{enumerate}
\item Start with the trivial knot positioned as $T_{1,1}$, whose
associated matrix is
\[M_{1+0,0+1}=\begin{pmatrix}1&0\\0&1\end{pmatrix}\ .\]
\item Perform $n_1$ $L$-constructions. The result is $T_{n_1+1,1}$, still a
trivial knot, but with associated matrix
\[L^{n_1}M_{1,1}=\begin{pmatrix}1&0\\n_1&1\end{pmatrix}\ .\]
\item Perform $n_2$ $U$-constructions, $n_3$ $L$-constructions, and so on,
except at the last step we perform only $n_k-1$ $U$- or $L$-constructions
(according as $k$ is even or odd). The resulting knot is $T_{p+r,q+s}$
and the effect of $U$- and $L$- constructions on the associated matrices
verifies that the associated matrix of $T_{p+r,q+s}$ is $M_{p+r,q+s}$.
\end{enumerate}

The construction we have discussed is for normalized $T_{p+r,q+s}$, but if
$q+s>p+r \geq 2$, the only difference is that $n_1=0$ and the first $n_2$
$U$-constructions produce trivial knots. If $p+r>0>q+s$, we may
perform a reflection to make both positive and proceed as before,
but the method can easily be adapted directly as
follows. To $T_{1,-1}$ we associate
\[M_{1,-1}=\begin{pmatrix}1&0\\0&-1\end{pmatrix}\ .\]
To find the matrix $M_{p+r,q+s}$ associated to $T_{p+r,q+s}$, we use the
continued fraction expression  $(p+r)/(q+s)=-[n_1,\ldots,n_k]$, with
$n_1\geq 0$ and $n_i\geq 1$ for $1\leq i\leq k$, to write
\[M_{p+r,q+s}=(U\text{ or }L)^{n_k-1}\cdots U^{n_2}L^{n_1}M_{1,-1}\ .\]
Starting with the trivial knot positioned as $T_{1,-1}$, perform
$n_1$ $L$-constructions, $n_2$ $U$-constructions, and so on, again ending
with $n_k-1$ ($U$ or $L$)-constructions to obtain the middle tunnel of
$T_{p+r,q+s}$.

For the next result, we introduce a useful notation.
\begin{notation}
The \textit{diagonal sum} of a $2\times 2$ matrix is the number
\[\diag \begin{pmatrix}a&b\\c&d\end{pmatrix} = ad+bc\ .\]
\end{notation}

The slopes of a $U$- or $L$-construction performed
on $T_{p+r,q+s}$ were obtained in~\cite{CMtorus}. We give them in the next
theorem, which for reference also summarizes some of the previous discussion.
\begin{theorem}\label{thm:torus_middle_tunnel_slopes}
Let $T_{p+r,q+s}$ be a torus knot, not $T_{\pm1,0}$ or $T_{0,\pm1}$. Applied to
$T_{p+r,q+s}$:
\begin{enumerate}
\item[(U)] The $U$-construction produces the middle tunnel of
$T_{p+2r,q+2s}$. Its slope is the slope of $\tau_U$ in
$(\rho,\rho^0)$-coordinates,
\[m_{\tau_U}=(p+r)s+(q+s)r =\diag(UM_{p+r,q+s})=\diag M_{p+2r,q+2s}\ .\]
\item[(L)] The $L$-construction produces the middle tunnel of
$T_{2p+r,2q+s}$. Its slope
is the slope of $\tau_L$ in $(\lambda,\lambda^0)$-coordinates,
\[m_{\tau_L}=p(q+s)+(p+r)q=\diag(LM_{p+r,q+s})=\diag M_{2p+r,2q+s}\ .\]
\end{enumerate}
\end{theorem}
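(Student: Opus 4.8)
The plan is to derive the theorem from the results of~\cite{CMtorus} (reviewed, e.g., in Section~1 of~\cite{CMgiant_steps}), separating the statement into a purely algebraic reformulation and a topological identification that has already been carried out. First I would record the relevant matrix identities. Since $M_{p+r,q+s}=\begin{pmatrix}p&q\\r&s\end{pmatrix}$, the discussion preceding the theorem gives
\[UM_{p+r,q+s}=\begin{pmatrix}p+r&q+s\\r&s\end{pmatrix}=M_{p+2r,q+2s}\ ,\qquad LM_{p+r,q+s}=\begin{pmatrix}p&q\\p+r&q+s\end{pmatrix}=M_{2p+r,2q+s}\ ,\]
and hence $\diag(UM_{p+r,q+s})=(p+r)s+(q+s)r=\diag M_{p+2r,q+2s}$ and $\diag(LM_{p+r,q+s})=p(q+s)+(p+r)q=\diag M_{2p+r,2q+s}$. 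So the three expressions in each clause agree automatically, and it remains only to identify the knot and to compute the first expression as the actual slope.

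For the topological part I would recall from~\cite{CMtorus} that the middle tunnel disk $\tau$ of $K_\tau=T_{p+r,q+s}$ has principal pair $\{\rho,\lambda\}$ with $K_\rho=T_{p,q}$ and $K_\lambda=T_{r,s}$, and that replacing $\rho$ by a cabling disk of the appropriate slope — the $U$-construction — produces a disk $\tau_U$ whose principal pair is $\{\lambda,\tau\}$. Reading off the resulting Heegaard diagram, or equivalently inspecting Figure~\ref{fig:drop-lambda_slope}(b), identifies $K_{\tau_U}$ as $T_{p+2r,q+2s}$: the cabling splices in a parallel copy of $K_\lambda=T_{r,s}$, exactly the effect recorded by left multiplication by $U$. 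The $L$-construction is the mirror statement, replacing $\lambda$ and using $K_\rho$, so $K_{\tau_L}=T_{2p+r,2q+s}$.

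For the slope, recall that by definition the slope of the $U$-construction is the slope of $\tau_U$ measured in $(\rho,\rho^0)$-coordinates, where $\rho^0$ is the chosen slope-$0$ separating disk that makes $(q+s)r$ turns around $H$. Following~\cite{CMtorus}, one computes this slope by counting intersections of $\partial\tau_U$ with $\partial\rho$ and $\partial\rho^0$ on the genus-$2$ surface: the $(q+s)r$ turns of $\rho^0$ contribute the summand $(q+s)r$, and the way $\tau_U$ runs over the two handles contributes $(p+r)s$, giving $m_{\tau_U}=(p+r)s+(q+s)r$; with the matrix identity above this is $\diag M_{p+2r,q+2s}$. The $L$ case is symmetric, with $\rho^0$ replaced by $\lambda^0$ and the two handles interchanged, yielding $m_{\tau_L}=p(q+s)+(p+r)q$.

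\emph{The main obstacle} is the bookkeeping in the intersection count: one must pin down orientations and, above all, the precise number of turns of the reference disk $\rho^0$ (resp.\ $\lambda^0$), so that the answer comes out as $(p+r)s+(q+s)r$ rather than its negative or a value differing by a multiple of the framing. Together with checking that the statement is stable under the reductions to the normalized case and under the exclusion of the degenerate knots $T_{\pm1,0}$ and $T_{0,\pm1}$ (for which $\tau$ is undefined), this is exactly the work done in~\cite{CMtorus}; here it suffices to invoke it and carry out the one-line translation into the diagonal-sum notation.
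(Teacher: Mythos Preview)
Your proposal is correct and aligns with the paper's own treatment: the paper does not give an independent proof of this theorem at all, but simply states it as a summary of results obtained in~\cite{CMtorus}, noting afterward that only the normalized case is explicitly treated there and that the general case follows by the reductions already discussed. Your write-up is in fact more detailed than the paper's, since you sketch the matrix identities and the intersection-count heuristic from~\cite{CMtorus}, but the substance is the same --- defer the actual slope computation to~\cite{CMtorus} and observe that the diagonal-sum notation is just a repackaging.
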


In \cite{CMtorus}, only the normalized case is explicitly treated, but as
we have seen the procedures extend easily enough to the general case.

\section{Drop disks and lift disks}
\label{sec:drop-lift}

Certain disks, called the drop-$\lambda$, lift-$\lambda$, drop-$\rho$, and
lift-$\rho$ disks, will play a key role.

Figure~\ref{fig:drop-lambda} shows a picture of the drop-$\lambda$ disk, called
$\sigma$ there, and the knots $K_\tau=T_{p+r,q+s}$, $K_\rho=T_{p,q}$,
and $K_\lambda=T_{r,s}$. Figure~\ref{fig:drop-lambda}(a) shows $\sigma$ in the
standard picture of the middle tunnel, and Figure~\ref{fig:drop-lambda}(b)
shows an isotopic repositioning of the first configuration. In the latter,
$K_\tau$ and $K_\lambda$ are on concentric tori in a product neighborhood
$T\times I$ of the standard Heegaard torus in $S^3$, and the $1$-handle
with cocore $\sigma$ is a vertical $1$-handle connecting tubular
neighborhoods of these two knots. The term ``drop-$\lambda$'' is short for
``drop-$K_\lambda$'', motivated by the fact that a copy of $K_\lambda$ can be
dropped to a lower torus level.
\begin{figure}
\begin{center}
\labellist
\pinlabel \large (a) at 113 3
\pinlabel $\rho$ at 163 286
\pinlabel $\lambda$ at 28 185
\pinlabel $\tau$ at 91 143
\pinlabel $\sigma$ at 48 194
\pinlabel $\lambda$ at 172 114
\pinlabel $\sigma$ at 152 98
\pinlabel $\rho$ at 37 12
\pinlabel $K_\tau$ at -11 133
\pinlabel $K_\lambda$ at -11 37
\pinlabel $K_\lambda$ at 192 254
\pinlabel $K_\tau$ at 192 156
\pinlabel \large (b) at 337 3
\pinlabel $\lambda$ at 267 212
\pinlabel $\rho$ at 334 212
\pinlabel $\lambda$ at 400 212
\pinlabel $K_\tau$ at 441 189
\pinlabel $K_\rho$ at 441 167
\pinlabel $K_\rho$ at 227 100
\pinlabel $K_\lambda$ at 441 78
\pinlabel $\sigma$ at 279 133
\pinlabel $\tau$ at 267 56
\pinlabel $\tau$ at 401 56
\endlabellist
\includegraphics[width=0.8\textwidth]{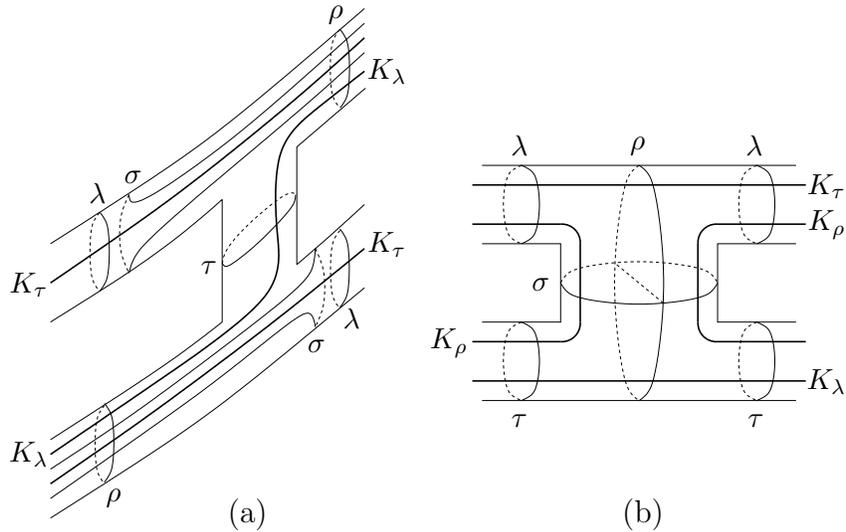}
\caption{The drop-$\lambda$ disk $\sigma$, first as seen in a
neighborhood of $K_\tau=T_{p+r,q+s}$ and the tunnel $\tau$, then after
dropping $K_\lambda=T_{r,s}$ and part of $K_\rho=T_{p,q}$.}
\label{fig:drop-lambda}
\end{center}
\end{figure}

The lift-$\lambda$ disk is similar, and is shown in
Figure~\ref{fig:lift-lambda}. The drop-$\rho$ and lift-$\rho$ disks are
similar, except that they cut across the upper copy of $\lambda$, travel
over the portion of the neighborhood of $T_{p+r,q+s}$ that does not contain
the drop-$\lambda$ disk, and cut across the lower copy of $\lambda$, while
staying disjoint from the copies of~$\rho$.
\begin{figure}
\begin{center}
\labellist

\pinlabel \large (a) at 113 3
\pinlabel $\rho$ at 163 286
\pinlabel $\lambda$ at 28 185
\pinlabel $\tau$ at 91 143
\pinlabel $\sigma$ at 48 194
\pinlabel $\lambda$ at 172 114
\pinlabel $\sigma$ at 152 98
\pinlabel $\rho$ at 37 12
\pinlabel $K_\tau$ at -11 133
\pinlabel $K_\lambda$ at -11 37
\pinlabel $K_\lambda$ at 192 254
\pinlabel $K_\tau$ at 192 156
\pinlabel \large (b) at 337 3
\pinlabel $\tau$ at 267 212
\pinlabel $\rho$ at 334 212
\pinlabel $\tau$ at 400 212
\pinlabel $K_\lambda$ at 441 189
\pinlabel $K_\rho$ at 441 167
\pinlabel $K_\rho$ at 227 100
\pinlabel $K_\tau$ at 441 78
\pinlabel $\sigma$ at 283 133
\pinlabel $\lambda$ at 267 56
\pinlabel $\lambda$ at 401 56

\endlabellist
\includegraphics[width=0.8\textwidth]{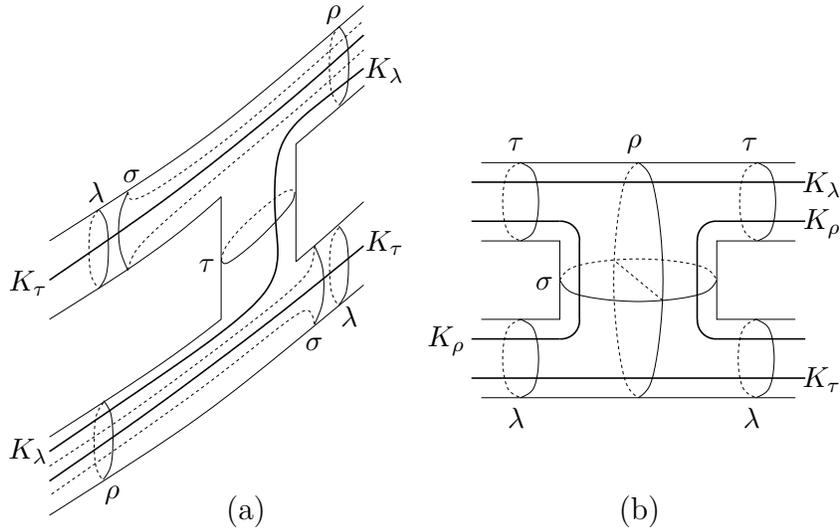}
\caption{The lift-$\lambda$ disk $\sigma$, first as seen in a
neighborhood of $K_\tau=T_{p+r,q+s}$ and the tunnel $\tau$, then after
lifting $K_\lambda=T_{r,s}$ and part of $K_\rho=T_{p,q}$.}
\label{fig:lift-lambda}
\end{center}
\end{figure}

\section{The splitting construction}
\label{sec:splitting}

We are now ready to present the basic construction. It is called the
\textit{splitting construction,} or just \textit{splitting,} because its
effect is to split a copy of $K_\rho=T_{p,q}$ or $K_\lambda=T_{r,s}$ off from
$K_\tau=T_{p+r,q+s}$, obtaining copies of these knots on two concentric
torus levels, then summing them together by a pair of arcs with some number of
twists.

There are actually four cases of the splitting construction. We begin with
the drop-$\lambda$ splitting. The first step was illustrated in
Figure~\ref{fig:drop-lambda}. Next, consider the disk $\gamma_n$ shown in
Figure~\ref{fig:gamma}. It is obtained from $\rho$ by $n$ right-handed
half-twists along $\sigma$. When $n<0$, the twists are left-handed, while
$\gamma_0=\rho$. The $\gamma_n$ are nonseparating in the genus-$2$
handlebody consisting of a tubular neighborhood of $K_\tau$ together with
the $1$-handle for its middle tunnel, since each $\gamma_n$ meets
$K_\tau$ in a single point.
\begin{figure}
\begin{center}
\labellist
\pinlabel $\lambda$ at 24 119
\pinlabel $\gamma_n$ at 80 119
\pinlabel $\lambda$ at 133 119
\pinlabel $K_\tau=T_{p+r,q+s}$ at 200 101
\pinlabel $K_\rho=T_{p,q}$ at 187 80
\pinlabel $\tau$ at 24 -7
\pinlabel $\tau$ at 134 -7
\pinlabel $K_\lambda=T_{r,s}$ at 187 10
\endlabellist
\includegraphics[width=0.36\textwidth]{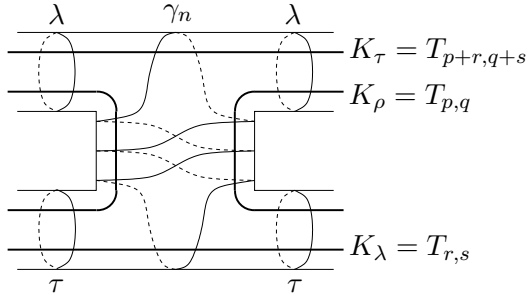}
\caption{The disk $\gamma_n$ is obtained from $\rho$ by $n$ right-handed
half-twists along $\sigma$. The case $n=3$ is shown here. For $n<0$, the
half-twists are left-handed, while $\gamma_0=\rho$.}
\label{fig:gamma}
\end{center}
\end{figure}

The disk $\gamma_n$ is a tunnel for the knot obtained by joining the
copies of $K_\tau$ and $K_\lambda$ in Figure~\ref{fig:drop-lambda} by a pair
of vertical arcs that have $n$ right-handed half-twists. Indeed, for $n\neq 0$
going from $\tau$ to $\gamma_n$ is a cabling construction replacing $\rho$,
so that the principal pair of $\gamma_n$ is $\{\lambda,\tau\}$. The case of
$n=0$ does not produce a cabling construction (that is, the resulting
tunnel would be $\rho$ so the principal path would have reversed
direction).

The lift-$\lambda$, drop-$\rho$, and lift-$\rho$ splittings are exactly
analogous, using the lift-$\lambda$, drop-$\rho$, and lift-$\rho$ disks
as $\sigma$ in the respective cases.

\section{The Goda-Hayashi-Ishihara tunnel}
\label{sec:GHI_example}

To illustrate the splitting construction, we will examine the first example
of a middle tunnel of a non-torus knot, which is due to H. Goda,
C. Hayashi, and K. Ishihara. The example was given by Goda and Hayashi
in~\cite{Goda-Hayashi}, indeed in an earlier preliminary version of that
article. In~\cite{Ishihara}, Ishihara developed a general algorithm to
compute slope and binary invariants and applied it to obtain the
principal path of the Goda-Hayashi tunnel, thereby proving that it is
regular. In principle, the algorithm could be used to obtain the slope
invariants, although this appears to be difficult.

The example is the Morimoto-Sakuma-Yokota knot of type
$(5,7,2)$~\cite{Morimoto-Sakuma-Yokota}. Goda and Hayashi credit H. Song
with bringing it to their attention. As noted
in~\cite{Morimoto-Sakuma-Yokota}, the knot can be moved into two concentric
Heegaard torus levels, apart from a pair of arcs that run between the
levels, as shown in Figure~\ref{fig:GHI}. The tunnel is seen as an arc in
the upper left-hand drawing, which is the knot. The remaining drawings show
two torus levels and a pair of connecting arcs running between them. On the
``upper'' level, the knot appears as a torus knot $T_{2,-3}$,
and on the ``bottom'' level as a torus knot $T_{3,-4}$. The pair of
connecting arcs has a single left-hand twist.

\begin{figure}
\begin{center}
\labellist
\endlabellist
\includegraphics[width=0.8\textwidth]{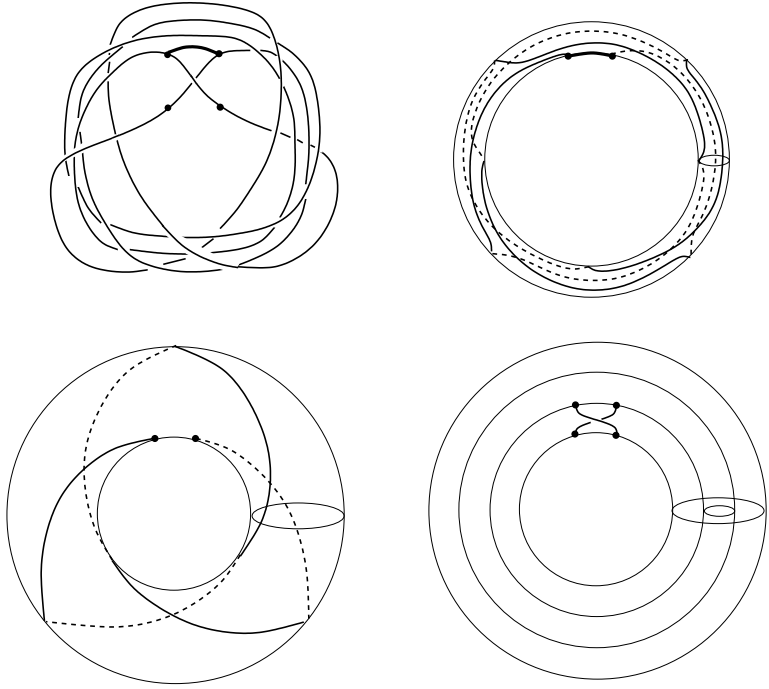}
\caption{The Goda-Hayashi-Ishihara example, seen as two torus levels
connected by a pair of arcs.}
\label{fig:GHI}
\end{center}
\end{figure}

This knot is obtained from the middle tunnel of $T_{3,-4}$ by a
lift-$\lambda$ splitting construction with $n=-1$. This allows us to find
its entire cabling sequence. We first calculate the continued fraction
expansion $-3/4=-[0,1,3]$ and use it to find that
\[M_{3,-4}=L^{(3-1)}U^1L^0\begin{pmatrix}1&0\\0&-1\end{pmatrix}
=\begin{pmatrix}1&-1\\2&-3\end{pmatrix}\ .\]
The cabling sequence is then as follows:
\begin{enumerate}
\item[1.] Starting with $T_{1,-1}$, a $U$-construction produces
$T_{1,-2}$ with associated matrix
\[M_{1,-2}=U\begin{pmatrix}1&0\\0&-1\end{pmatrix}
=\begin{pmatrix}1&-1\\0&-1\end{pmatrix}\ .\]
It is a trivial cabling construction since it produces a
trivial knot.
\item[2.] Next, an $L$-construction produces $T_{2,-3}$,
with associated matrix
\[M_{2,-3}=LM_{1,-2}=\begin{pmatrix}1&-1\\1&-2\end{pmatrix}\ .\]
According to
Theorem~\ref{thm:torus_middle_tunnel_slopes},
the slope of this cabling is $\diag(M_{2,-3}) = -3$, so the simple slope is
$[-1/3]=[2/3]$
(the simple slope, used for the first
nontrivial cabling construction in the cabling sequence, is the reciprocal
of the slope modulo $\Q/\Z$).
\item[3.] Another $L$-construction produces the middle tunnel of
$T_{3,-4}$,
with associated matrix
\[M_{3,-4}=LM_{2,-3}
=\begin{pmatrix}1&-1\\2&-3\end{pmatrix}\ .\]
According to Theorem~\ref{thm:torus_middle_tunnel_slopes},
This time the slope is $\diag(M_{3,-4}) = -5$.
\item[4.] A lift-$\lambda$ splitting lifts a copy of
$K_\lambda=T_{2,-3}$ to the top level, and using $\gamma_{-1}$ as the
tunnel disk puts one left-hand twist in the two vertical strands, producing
the Goda-Hayashi-Ishihara knot. A tunnel arc for $\gamma_{-1}$ runs
horizontally between the two vertical strands, so is the
Goda-Hayashi-Ishihara tunnel.
\end{enumerate}
This construction proves that the tunnel is regular, since the
$L$-constructions replace $\lambda$, while the lift-$\lambda$ construction
replaces $\rho$. In Section~\ref{sec:GHI_calculation}, we will see that the
final splitting construction in its cabling sequence has slope~$-19$,
giving the full principal path of the Goda-Hayashi-Ishihara tunnel shown in
Figure~\ref{fig:GHI_path} below.

As remarked in Section~\ref{sec:torus_knots}, one can also maneuver so that
the splitting takes place on a normalized torus knot $T_{p+r,q+s}$.
Starting with the Goda-Hayashi-Ishihara knot, apply an isotopy of $S^3$
that interchanges the meridian and longitude of the level tori. It inverts
the order as well, putting $T_{4,-3}$ on the upper level and $T_{3,-2}$ on
the bottom level, while preserving the tunnel. The vertical arcs still have
a left-handed twist. Next, apply an orientation-reversing diffeomorphism
that fixes the longitudes of the Heegaard tori and reflects the meridians,
after which the top level is $T_{4,3}$ and the bottom level is
$T_{3,2}$. In addition, the two vertical arcs now have one right-handed
half-twist, rather than left-handed, since the reflection reverses the
sense of the twist. The orientation-reversing diffeomorphism negates the
values of the slope invariants, and does not change the binary
invariants. Since the continued fraction expansion of $4/3$ is $[1,3]$, the
torus knot $T_{4,3}=T_{3+1,2+1}$ has associated matrix
\[M_{4,3}=U^2L = \begin{pmatrix}3&2\\1&1\end{pmatrix}\ .\]
Starting with $T_{1,1}$, one $L$-construction followed by two
$U$-constructions produces the middle tunnel of $T_{4,3}$, with
$K_\rho=T_{3,2}$ and $K_\lambda=T_{1,1}$. Now a drop-$\rho$ splitting drops
a copy of $T_{3,2}$ to the lower level, and using $\gamma_1$ puts the
right-hand half-twist in the vertical strands. The slope and binary
invariants can be calculated, as we will see in
Section~\ref{sec:GHI_calculation} below, and the slope invariants negated
to obtain the slope invariants of the original unreflected example.

As noted in \cite{Goda-Hayashi}, infinitely many similar examples are
obtained by changing the number of twists of the vertical strands, that is,
by different choices of~$\gamma_n$.

\section{The first general slope calculation}
\label{sec:first_general_slope_calculation}

In order to understand the slope invariants of tunnels resulting from the
splitting constructions, we must calculate the slopes of the disks
$\gamma_n$ in certain coordinates. For this, one needs the slopes of the
drop- and lift-disks. In fact, there is a general slope calculation that
covers all four cases (as well as additional cases that will arise
in~\cite{CMiterated}). In this section, we present this general slope
calculation, and in the next section, we present the calculation of the
slopes of disks~$\gamma_n$.

Consider the setup illustrated in Figure~\ref{fig:first_general}(a). The
first drawing shows tubular neighborhoods of two (oriented) knots $K_U$ and
$K_L$, contained in a product neighborhood $T\times I$ of a Heegaard torus
$T$ of $S^3$. The neighborhoods are connected by a vertical $1$-handle to
yield a genus-$2$ handlebody $H$. In our applications, $H$ will always be
unknotted, although that is not needed for the calculations of this and the
next section.
\begin{figure}
\begin{center}
\labellist
\pinlabel \large (a) at 67 -7
\pinlabel \large (b) at 217 -7
\pinlabel \large (c) at 370 -7
\scriptsize\pinlabel $D_U^-$ at 20 123
\scriptsize\pinlabel $D$ at 65 123
\scriptsize\pinlabel $D_U^+$ at 110 123
\scriptsize\pinlabel $D_U^-$ at 170 123
\scriptsize\pinlabel $D_U^+$ at 261 123
\scriptsize\pinlabel $D_U^-$ at 320 123
\scriptsize\pinlabel $D^0$ at 368 123
\scriptsize\pinlabel $D_U^+$ at 418 123
\scriptsize\pinlabel $K_U$ at -8 105
\scriptsize\pinlabel $K_U^0$ at 142 95
\scriptsize\pinlabel $K_L^0$ at 142 39
\scriptsize\pinlabel $K_L$ at -8 31
\small\pinlabel $\sigma$ at 28 68
\scriptsize\pinlabel $D_L^-$ at 20 12
\scriptsize\pinlabel $D_L^+$ at 111 12
\scriptsize\pinlabel $D_L^-$ at 171 12
\scriptsize\pinlabel $D_L^+$ at 262 12
\scriptsize\pinlabel $D_L^-$ at 322 12
\scriptsize\pinlabel $D_L^+$ at 419 12
\endlabellist
\includegraphics[width=0.93\textwidth]{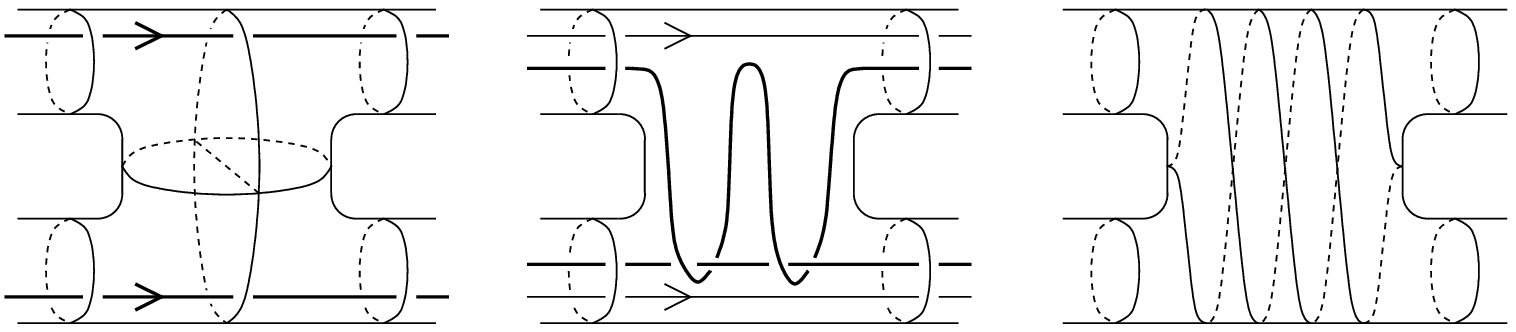}
\caption{The setup for the first general slope calculation.}
\label{fig:first_general}
\end{center}
\end{figure}

We interpret $K_U$ as the ``upper'' knot, contained in $T\times (3/4,1]$,
and $K_L$ as the ``lower'' knot, contained in $T\times [0,1/4)$.  The
vertical $1$-handle with cocore $\sigma$ is assumed to run between $T\times
\{1/4\}$ and $T\times \{3/4\}$, with the separating disk $\sigma$ being its
intersection with~$T\times \{1/2\}$.

The homology group $H_1(T\times I)\cong H_1(T)$ will have ordered basis the
oriented longitude and meridian $\ell$ and $m$ shown in
Figure~\ref{fig:torus_knot_notation}. Our linking convention is that
$\Lk(m\times \{1\},\ell\times \{0\})=+1$. Now, suppose that $K_U$ represents
$(\ell_U,m_U)$ and $K_L$ represents $(\ell_L,m_L)$ in $H_1(T\times I)$.
Since $\Lk(m\times \{0\},\ell\times \{1\})=0$, we have
$\Lk(K_U,K_L)=m_U\ell_L$.

The disks $D_U^+$ and $D_U^-$ are parallel in $H$, as are the disks $D_L^+$
and $D_L^-$, and these four disks bound a ball $B$ seen in
Figure~\ref{fig:first_general}. Our task is to compute the slope of
$\sigma$ in $(D,D^0)$-coordinates. Here, $D$ is a slope disk in $B$ seen in
Figure~\ref{fig:first_general}(a), and $D^0$ is the slope-$0$
disk in $B$ that meets $D$ in a single arc and separates $H$ into two solid
tori with linking number~$0$.

Figure~\ref{fig:first_general}(b) shows core knots $K_U^0$ and $K_L^0$ of
the complementary solid tori of $D^0$. They are like $K_U$ and $K_L$,
except that they have $\Lk(K_U,K_L)$ right-handed full twists in this
picture. Provided that the orientations of $K_U$ and $K_L$ appear from
left-to-right, as indicated in Figure~\ref{fig:first_general}(b), each
right-handed twist changes the linking number by $-1$.
Figure~\ref{fig:first_general} is drawn for the case $\Lk(K_U,K_L)=2$, so
there are two right-handed twists and $\Lk(K_U^0,K_L^0)=0$.

If one uses the opposite linking convention that $\Lk(m\times
\{1\},\ell\times \{0\})=-1$, then $\Lk(K_U,K_L)$ is negated but the effect
of a right-handed twist is also negated. Thus $K_U^0$ and $K_L^0$ are the
same whatever independent of the linking convention, and the slope-$0$ disk
$D^0$, shown in Figure~\ref{fig:first_general}(c), is well-defined.

We are now ready to compute the slope of $\sigma$ in $(D,D^0)$-coordinates.
Figure~\ref{fig:first_general_cabling_arcs}(a) shows a cabling arc
$\alpha(D^0)$, that is, an arc in $B\cap \partial H$ connecting two
frontier disks and disjoint from $D^0$. In this instance, the disk is the
$D^0$ shown in Figure~\ref{fig:first_general}(c), so $\alpha(D^0)$ makes
two turns around $B$ in the direction shown. Also seen in
Figure~\ref{fig:first_general_cabling_arcs}(a) is a cabling arc
$\alpha(\sigma)$ for~$\sigma$.
\begin{figure}
\begin{center}
\labellist
\pinlabel \large (a) at 104 228
\scriptsize\pinlabel $D_U^-$ at 25 421
\scriptsize\pinlabel $D_U^+$ at 182 421
\scriptsize\pinlabel $\alpha(\sigma)$ at 103 395
\scriptsize\pinlabel $\alpha(D^0)$ at 103 259
\scriptsize\pinlabel $D_L^-$ at 25 258
\scriptsize\pinlabel $D_L^+$ at 182 257
\pinlabel \large (b) at 104 5
\scriptsize\pinlabel $D_U^-$ at 25 199
\scriptsize\pinlabel $D_U^+$ at 181 199
\scriptsize\pinlabel $\alpha(\sigma)$ at 104 199
\scriptsize\pinlabel $\alpha(D^0)$ at 104 59
\scriptsize\pinlabel $D_L^-$ at 25 34
\scriptsize\pinlabel $D_L^+$ at 179 34
\pinlabel \large (c) at 393 7
\tiny\pinlabel $D_L^+$ at 306 398
\tiny\pinlabel $D_L^-$ at 355 398
\tiny\pinlabel $D_L^+$ at 406 398
\tiny\pinlabel $D_L^-$ at 457 398
\tiny\pinlabel $D_U^+$ at 331 322
\tiny\pinlabel $D_U^-$ at 381 322
\tiny\pinlabel $D_U^+$ at 431 322
\tiny\pinlabel $D_U^-$ at 482 322
\tiny\pinlabel $D_L^+$ at 331 269
\tiny\pinlabel $D_L^-$ at 381 269
\tiny\pinlabel $D_L^+$ at 431 269
\tiny\pinlabel $D_L^-$ at 482 269
\tiny\pinlabel $D_U^+$ at 306 215
\tiny\pinlabel $D_U^-$ at 355 215
\tiny\pinlabel $D_U^+$ at 406 215
\tiny\pinlabel $D_U^-$ at 457 215
\tiny\pinlabel $D_L^+$ at 306 165
\tiny\pinlabel $D_L^-$ at 355 165
\tiny\pinlabel $D_L^+$ at 406 165
\tiny\pinlabel $D_L^-$ at 457 165
\tiny\pinlabel $D_U^+$ at 331 116
\tiny\pinlabel $D_U^-$ at 381 116
\tiny\pinlabel $D_U^+$ at 431 116
\tiny\pinlabel $D_U^-$ at 482 116
\tiny\pinlabel $D_L^+$ at 331 63
\tiny\pinlabel $D_L^-$ at 381 63
\tiny\pinlabel $D_L^+$ at 431 63
\tiny\pinlabel $D_L^-$ at 482 63
\endlabellist
\includegraphics[width=0.9\textwidth]{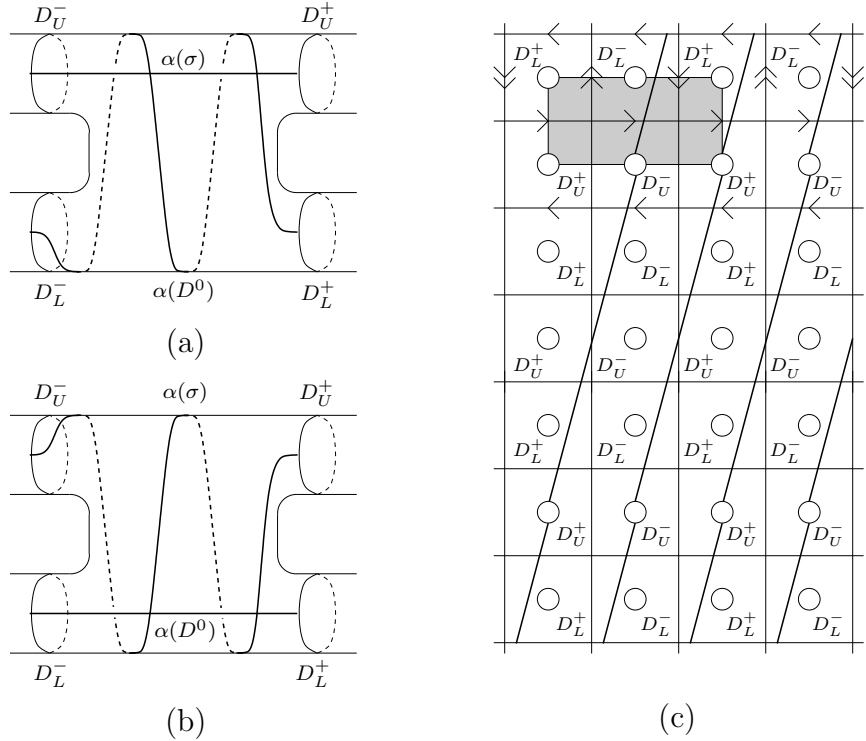}
\caption{The cabling arc $\alpha(\sigma)$ and some of its lifts.}
\label{fig:first_general_cabling_arcs}
\end{center}
\end{figure}

Figure~\ref{fig:first_general_cabling_arcs}(b) is simply
Figure~\ref{fig:first_general_cabling_arcs}(a) redrawn so that
$\alpha(D^0)$ appears horizontal. This moves $\alpha(\sigma)$ to an arc
that makes two turns in the opposite direction from the turns of
$\alpha(D^0)$ in Figure~\ref{fig:first_general_cabling_arcs}(a), that is,
the two right-handed turns of $\alpha(D^0)$ become two left-handed turns
of $\alpha(\sigma)$.

Figure~\ref{fig:first_general_cabling_arcs}(c) shows part of the covering
space of $\Sigma=B\cap \partial H$ seen in Figure~8 of ~\cite{CMsemisimple} (originally, in Figure~7 of~\cite{CMtree}). The
shaded region is a fundamental domain, and each boundary circle of the
covering space double covers the indicated boundary circle of $\Sigma$. The
lifts of $\alpha(D^0)$ are horizontal arcs connecting inverse image circles
of $D_U^+$ to inverse image circles of $D_U^-$. The lifts of the cabling arc
of $\sigma$ appear as line segments connecting the inverse image circles
for $D_L^+$ to inverse image circles for $D_L^-$. In the case shown, those
segments have slope pair $[1,4]$, as each left-handed turn of
$\alpha(\sigma)$ around $B$ produces two vertical units of rise in the
lift. In general, if $\alpha(D^0)$ made $R$ right-handed twists, the slope
pair of $\sigma$ is $[1,2R]$, that is, its slope is $2R/1$. Since $R$ was
$\Lk(K_U,K_L)$, this yields our first general slope calculation. Assuming
that the orientation of $K_U$ and $K_L$ is from left to right in the
figures we have discussed, and that we use our linking convention
$\Lk(m\times \{1\},\ell\times \{0\})=1$, we have
\begin{proposition}\label{prop:first_general_slope_calculation}
In Figure~\ref{fig:first_general}(a), the slope of $\sigma$ in
$(D,D^0)$-coordinates is $2\Lk(K_U,K_L)$. Consequently, if $K_U$ represents
$(\ell_U,m_U)$ and $K_L$ represents $(\ell_L,m_L)$ in $H_1(T\times I)$,
then the slope of $\sigma$ is $2m_U\ell_L$.
\end{proposition}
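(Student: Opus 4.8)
The plan is to carry out the computation in two stages: first reduce the homological linking number $\Lk(K_U,K_L)$ to the combinatorial data, then compute the slope of $\sigma$ relative to the framing determined by $D^0$ by passing to the double cover of $\Sigma=B\cap\partial H$.

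For the first stage, since $K_U\subset T\times(3/4,1]$ and $K_L\subset T\times[0,1/4)$ we may represent $K_U$ by $\ell_U(\ell\times\{1\})+m_U(m\times\{1\})$ and $K_L$ by $\ell_L(\ell\times\{0\})+m_L(m\times\{0\})$ in $H_1(T\times I)$. Bilinearity of the linking pairing together with the conventions $\Lk(m\times\{1\},\ell\times\{0\})=+1$ and $\Lk(m\times\{0\},\ell\times\{1\})=0$ (the latter since $\ell\times\{1\}$ bounds a disk disjoint from $T\times[0,1/4)$, and $\ell\times\{j\},m\times\{j\}$ are unlinked on a single level) gives $\Lk(K_U,K_L)=m_U\ell_L$. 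This yields the ``consequently'' clause as soon as the first assertion is proved.

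For the second stage, the crux is to identify $D^0$ correctly. By definition $D^0$ is the slope-$0$ separating disk meeting $D$ in one arc and splitting $H$ into two solid tori with linking number $0$. Starting from the evident separating disk of Figure~\ref{fig:first_general}(a), whose complementary cores are $K_U$ and $K_L$ with linking number $R:=\Lk(K_U,K_L)$, one inserts $R$ right-handed full twists between the two strands to kill the linking, using that each such twist changes the linking number by $-1$ when both strands run left-to-right as in Figure~\ref{fig:first_general}(b); this is exactly the $D^0$ of Figure~\ref{fig:first_general}(c), winding $R$ times around $B$. One notes this is independent of the linking convention, since reversing the convention negates both $R$ and the effect of a right-handed twist. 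Then, to read off the slope of $\sigma$ in $(D,D^0)$-coordinates, I would compare cabling arcs: an arc $\alpha(D^0)$ in $B\cap\partial H$ disjoint from $D^0$ makes $R$ turns around $B$, and redrawing so that $\alpha(D^0)$ is horizontal converts these $R$ right-handed turns into $R$ left-handed turns of the cabling arc $\alpha(\sigma)$ for $\sigma$. Lifting to the double cover $\Sigma$ of $B\cap\partial H$ (Figure~8 of~\cite{CMsemisimple}), the lifts of $\alpha(D^0)$ are horizontal while the lifts of $\alpha(\sigma)$ are segments of slope pair $[1,2R]$, since each turn around $B$ contributes two vertical units of rise in the cover; hence the slope of $\sigma$ is $2R=2\Lk(K_U,K_L)$, and combined with the first stage it is $2m_U\ell_L$.

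The main obstacle is the bookkeeping in identifying $D^0$ — getting the number and handedness of twists exactly right so that the complementary solid tori genuinely have linking number $0$ — and then correctly extracting the slope pair $[1,2R]$ from the covering-space picture, in particular accounting for the factor of $2$ coming from the double cover. These are the steps where sign and orientation errors creep in, so I would fix all orientation and linking conventions explicitly at the outset and verify their consistency, just as the discussion surrounding the statement does.
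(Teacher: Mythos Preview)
Your proposal is correct and follows essentially the same argument as the paper: compute $\Lk(K_U,K_L)=m_U\ell_L$ from the linking conventions, identify $D^0$ by inserting $R=\Lk(K_U,K_L)$ right-handed twists, compare cabling arcs after straightening $\alpha(D^0)$, and read off the slope pair $[1,2R]$ in the covering space of $\Sigma$. One small terminological slip: the cover used is not a double cover of $\Sigma=B\cap\partial H$ but an infinite cover in which each boundary circle double covers its image in $\Sigma$; the factor of $2$ you correctly identify comes from this boundary behavior.
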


As an immediate consequence:
\begin{corollary}\label{coro:splitting_disk_slopes}
The slopes of the splitting disks are as follows:
\begin{enumerate}
\item[(a)] In $(\rho,\rho^0)$-coordinates,
the drop-$\lambda$ disk has slope $2r(q+s)$.
\item[(b)] In $(\rho,\rho^0)$-coordinates,
the lift-$\lambda$ disk has slope $2s(p+r)$.
\item[(c)] In $(\lambda,\lambda^0)$-coordinates,
the drop-$\rho$ disk has slope $2p(q+s)$
\item[(d)] In $(\lambda,\lambda^0)$-coordinates,
the lift-$\rho$ disk has slope $2q(p+r)$.
\end{enumerate}
\end{corollary}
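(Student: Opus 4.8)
The plan is to derive Corollary~\ref{coro:splitting_disk_slopes} directly from Proposition~\ref{prop:first_general_slope_calculation} by identifying, in each of the four splitting constructions, the roles of the ``upper'' knot $K_U$ and ``lower'' knot $K_L$, and then reading off their homology classes in $H_1(T\times I)$. Once those classes are known, the slope is $2m_U\ell_L$ by the Proposition, and the corollary is a matter of substitution. So the real content is a bookkeeping step: match the configuration of each drop/lift disk (as drawn in Figures~\ref{fig:drop-lambda} and~\ref{fig:lift-lambda}, together with its drop-$\rho$/lift-$\rho$ analogues) to the normal form of Figure~\ref{fig:first_general}(a), with $\sigma$ the cocore of the vertical $1$-handle.

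Here is how I would carry out the four cases. For the \emph{drop-$\lambda$} disk (part (a)): Figure~\ref{fig:drop-lambda}(b) shows that after the isotopy, the genus-$2$ handlebody $H$ is a neighborhood of an upper copy of $K_\rho=T_{p,q}$ joined by the vertical $1$-handle (cocore $\sigma$) to a lower copy of $K_\lambda=T_{r,s}$. Thus $K_U$ represents $(p,q)$ and $K_L$ represents $(r,s)$, so $\ell_L=r$ and $m_U=q$, but the coordinates used are $(\rho,\rho^0)$ rather than $(\lambda,\lambda^0)$ --- and here I must be careful about which knot is ``upper.'' Since the $(\rho,\rho^0)$-coordinate system is the one in which the relevant slope is measured, the upper knot (the one whose neighborhood is cabled, i.e.\ the one the disk $\rho$ meets) is $K_\rho$, and the dropped knot is $K_\lambda$; but the longitude that enters the linking number via $\Lk(K_U,K_L)=m_U\ell_L$ comes from the \emph{lower} knot. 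The slope is then $2\Lk(K_U,K_L)$; reconciling this with the claimed value $2r(q+s)$ requires noting that $K_\tau=T_{p+r,q+s}$ is the connected object and that the linking number picks up the full class $(p+r,q+s)$ on the side through which $\rho^0$ turns. Concretely: in the drop-$\lambda$ picture the lower solid torus is a neighborhood of $K_\lambda=T_{r,s}$ so $\ell_L=r$, while the upper piece --- the part of $\partial H$ met by $\rho$ and $\rho^0$, through which the linking is read --- carries meridian coordinate $q+s$ because that region wraps around following all of $K_\tau$; hence $m_U=q+s$ and the slope is $2r(q+s)$. The lift-$\lambda$ case (part (b)) is identical except that Figure~\ref{fig:lift-lambda}(b) shows $K_\lambda$ lifted to the \emph{top} level, reversing the roles of upper and lower, so now $\ell_U=r$ is irrelevant and instead the lower piece through which $\rho^0$ turns carries longitude coordinate $p+r$ while the upper (lifted) $K_\lambda=T_{r,s}$ contributes meridian coordinate $s$; the slope is $2s(p+r)$. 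For (c) and (d), the drop-$\rho$ and lift-$\rho$ disks (described in the paragraph following Figure~\ref{fig:lift-lambda}) cut across the copies of $\lambda$ and stay disjoint from $\rho$, so the coordinate system is $(\lambda,\lambda^0)$ and the roles of $p,q$ and $r,s$ swap relative to (a), (b): dropping $K_\rho=T_{p,q}$ gives $\ell_L=p$ against meridian $q+s$, so slope $2p(q+s)$; lifting $K_\rho$ gives meridian $q$ against longitude $p+r$, so slope $2q(p+r)$.

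The main obstacle --- and the only non-mechanical point --- is getting the \emph{signs and the assignment of which class is $\ell_L$ versus $m_U$} exactly right in each of the four cases, i.e.\ correctly tracking how the isotopies of Figures~\ref{fig:drop-lambda} and~\ref{fig:lift-lambda} carry $K_\tau$, $K_\rho$, $K_\lambda$ into the upper/lower normal form of Figure~\ref{fig:first_general}(a), and which torus level the relevant slope-$0$ disk ($\rho^0$ or $\lambda^0$) wraps around. A clean way to organize this is to observe that in every case $\Lk(K_U,K_L)$ equals the product of one coordinate of $T_{p+r,q+s}$ with one coordinate of whichever of $T_{p,q}$, $T_{r,s}$ is split off --- because the un-split piece of $H$ still follows all of $K_\tau$ homologically --- and then the four formulas $2r(q+s)$, $2s(p+r)$, $2p(q+s)$, $2q(p+r)$ are exactly the four such products compatible with the drop/lift and $\rho/\lambda$ choices. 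With the homology classes pinned down, each formula follows from Proposition~\ref{prop:first_general_slope_calculation} immediately, so the corollary requires no further calculation; I would simply state that applying the Proposition to each configuration yields (a)--(d), referring the reader to Figures~\ref{fig:drop-lambda}, \ref{fig:lift-lambda}, and \ref{fig:first_general} for the identification.
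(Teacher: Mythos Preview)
Your overall approach is exactly the paper's: the corollary is stated there with no proof at all, just ``As an immediate consequence'' of Proposition~\ref{prop:first_general_slope_calculation}. So the only work is identifying $K_U$ and $K_L$ in each case and reading off $m_U$ and $\ell_L$.

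However, your identification in case~(a) is wrong, and the reasoning that follows is tangled as a result. You write that in the drop-$\lambda$ picture ``$H$ is a neighborhood of an upper copy of $K_\rho=T_{p,q}$ joined by the vertical $1$-handle (cocore $\sigma$) to a lower copy of $K_\lambda=T_{r,s}$,'' so that $K_U$ represents $(p,q)$. That is not what Figure~\ref{fig:drop-lambda}(b) shows. The text says explicitly that in that picture ``$K_\tau$ and $K_\lambda$ are on concentric tori\ldots\ and the $1$-handle with cocore $\sigma$ is a vertical $1$-handle connecting tubular neighborhoods of these two knots.'' Thus $K_U=K_\tau=T_{p+r,q+s}$ and $K_L=K_\lambda=T_{r,s}$, so $m_U=q+s$ and $\ell_L=r$ directly, giving slope $2r(q+s)$ with no need for the detour about how ``the upper piece wraps around following all of $K_\tau$.'' (The disk $\rho$ meets $K_\tau$, not $K_\rho$; the knot $K_\rho$ is the core of the solid torus obtained by cutting $H$ along $\rho$, and it appears in Figure~\ref{fig:drop-lambda}(b) only as an auxiliary label on that core, not as one of the two level knots joined by the $\sigma$-handle.)

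The same clean pattern handles all four cases: one of $K_U,K_L$ is always $K_\tau=T_{p+r,q+s}$, and the other is the split-off knot $K_\lambda=T_{r,s}$ or $K_\rho=T_{p,q}$, with ``drop'' versus ``lift'' deciding which sits above. With that said, each of (a)--(d) is a one-line substitution into $2m_U\ell_L$, and your paragraph about ``the main obstacle'' becomes unnecessary.
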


\section{The second general slope calculation}
\label{sec:second_general_slope_calculation}

It remains to obtain the slope of $\gamma_n$. Figure~\ref{fig:gamma_slopes}
illustrates the calculation.
\begin{figure}
\begin{center}
\labellist
\pinlabel \large (a) at 104 115
\scriptsize\pinlabel $D_U^-$ at 26 316
\scriptsize\pinlabel $D_U^+$ at 193 316
\scriptsize\pinlabel $\alpha(\sigma)$ at 107 290
\scriptsize\pinlabel $\alpha(\gamma_3)$ at 107 197
\scriptsize\pinlabel $D_L^-$ at 26 144
\scriptsize\pinlabel $D_L^+$ at 193 144
\normalsize\pinlabel \large (b) at 398 3
\tiny\pinlabel $D_L^-$ at 336 68
\tiny\pinlabel $D_L^+$ at 383 68
\tiny\pinlabel $D_L^-$ at 434 68
\tiny\pinlabel $D_L^+$ at 484 68
\tiny\pinlabel $D_U^-$ at 313 120
\tiny\pinlabel $D_U^+$ at 362 120
\tiny\pinlabel $D_U^-$ at 411 120
\tiny\pinlabel $D_U^+$ at 460 120
\tiny\pinlabel $D_L^-$ at 336 168
\tiny\pinlabel $D_L^+$ at 383 168
\tiny\pinlabel $D_L^-$ at 434 168
\tiny\pinlabel $D_L^+$ at 484 168
\tiny\pinlabel $D_U^-$ at 313 220
\tiny\pinlabel $D_U^+$ at 362 220
\tiny\pinlabel $D_U^-$ at 411 220
\tiny\pinlabel $D_U^+$ at 460 220
\tiny\pinlabel $D_L^-$ at 336 269
\tiny\pinlabel $D_L^+$ at 383 269
\tiny\pinlabel $D_L^-$ at 434 269
\tiny\pinlabel $D_L^+$ at 484 269
\tiny\pinlabel $D_U^-$ at 313 319
\tiny\pinlabel $D_U^+$ at 362 319
\tiny\pinlabel $D_U^-$ at 411 319
\tiny\pinlabel $D_U^+$ at 460 319
\tiny\pinlabel $D_L^-$ at 336 367
\tiny\pinlabel $D_L^+$ at 383 367
\tiny\pinlabel $D_L^-$ at 434 367
\tiny\pinlabel $D_L^+$ at 484 367
\tiny\pinlabel $D_U^-$ at 313 419
\tiny\pinlabel $D_U^+$ at 362 419
\tiny\pinlabel $D_U^-$ at 411 419
\tiny\pinlabel $D_U^+$ at 460 419
\endlabellist
\includegraphics[width=0.9\textwidth]{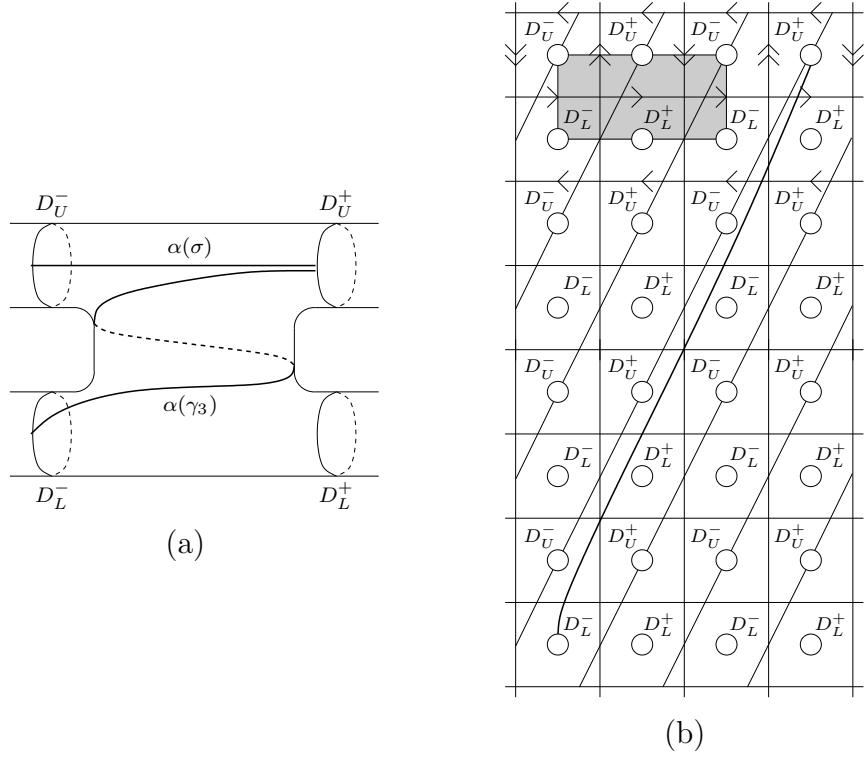}
\caption{Calculation of the slope of $\gamma_n$.}
\label{fig:gamma_slopes}
\end{center}
\end{figure}
Figure~\ref{fig:gamma_slopes}(a) shows the ball $B$ from
Figures~\ref{fig:first_general}(a)
and~\ref{fig:first_general_cabling_arcs}(a), bounded by the disks $D_U^-$,
$D_U^+$, $D_L^-$, and $D_L^+$.  The arc $\alpha(\sigma)$
connecting $D_U^-$ and $D_U^+$ is a
cabling arc for $\sigma$, and the arc $\alpha(\gamma_3)$
connecting $D_L^-$ and $D_U^+$ is a
cabling arc for $\gamma_3$. In general, one of the cabling arcs for $\gamma_n$
connects $\D_L^-$ to either $D_U^-$ or $D_U^+$ according as $n$ is
even or odd.

Again we use the covering space from
Figure~\ref{fig:first_general_cabling_arcs}(c).  As before, the lifts of
$\alpha(\sigma)$ appear as line segments connecting inverse image circles
for $D_U^+$ to inverse image circles for $D_U^-$. In the case shown in
Figure~\ref{fig:gamma_slopes}(b), those segments have slope pair $[1,2]$,
while Proposition~\ref{prop:first_general_slope_calculation} show that in
general, the slope pair of the lifts of $\alpha(\sigma)$
is~$[1,2\Lk(K_U,K_L)]=[1,m_\sigma]$, where $m_\sigma$ is the slope of
$\sigma$ in $(D,D^0)$-coordinates.

Figure~\ref{fig:gamma_slopes}(b) also shows a lift of
$\alpha(\gamma_3)$. Since $\alpha(\gamma_3)$ or in general
$\alpha(\gamma_n)$ is disjoint from $\alpha(\sigma)$, the lift cannot cross
the line segments that are lifts of $\alpha(\sigma)$. Each right-hand
half-twist of $\gamma_n$ corresponds to a right-hand half-twist of
$\alpha(\gamma_n)$, and an upward displacement of the lift of
$\alpha(\gamma_n)$ that runs roughly parallel to one of the segments that
is a lift of $\alpha(\sigma)$. Thus in general the slope pair of the lifts
of the cabling arc for $\gamma_n$ is $[0,1]+n[1,m_\sigma]$. Consequently
the slope pair of $\gamma_n$ is $[n,1+nm_\sigma]$, and its slope is
$(1+nm_\sigma)/n=m_\sigma+1/n$. This gives our second general slope
calculation. Again with our usual orientation and linking conventions:
\begin{proposition}\label{prop:second_general_slope_calculation}
The slope of $\gamma_n$ in $(D,D^0)$-coordinates is $m_\sigma+1/n$.
\end{proposition}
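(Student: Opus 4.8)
The plan is to reduce the computation to the covering-space picture of $\Sigma = B\cap\partial H$ already introduced for Proposition~\ref{prop:first_general_slope_calculation}, and then read off the slope pair of a cabling arc $\alpha(\gamma_n)$ by tracking how the $n$ half-twists defining $\gamma_n$ (from $\rho$, along $\sigma$) translate into displacements in the universal-type cover shown in Figure~\ref{fig:first_general_cabling_arcs}(c). First I would fix, as in Figure~\ref{fig:gamma_slopes}(a), a cabling arc $\alpha(\sigma)$ running between $D_U^-$ and $D_U^+$ and a cabling arc $\alpha(\gamma_n)$ running from $D_L^-$ to $D_U^-$ or $D_U^+$ according to the parity of $n$; the existence of such arcs, and the fact that slope is computed from the slope pair of their lifts, is exactly the framework recalled from \cite{CMtree} and \cite{CMsemisimple}, so I may take it as given. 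By Proposition~\ref{prop:first_general_slope_calculation} the lifts of $\alpha(\sigma)$ are line segments of slope pair $[1,m_\sigma]$, where $m_\sigma$ is the slope of $\sigma$ in $(D,D^0)$-coordinates.

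The key step is the analysis of a lift of $\alpha(\gamma_n)$. The arc $\gamma_n$ is obtained from $\gamma_0=\rho$ by $n$ right-handed half-twists along $\sigma$, and a cabling arc may be chosen so that each half-twist contributes one half-twist of $\alpha(\gamma_n)$ around $B$, hence one ``parallel copy'' of the displacement realized by a lift of $\alpha(\sigma)$. Since $\alpha(\gamma_n)$ is disjoint from $\alpha(\sigma)$ (the disks $\gamma_n$ and $\sigma$ are disjoint), a lift of $\alpha(\gamma_n)$ cannot cross the line segments that are lifts of $\alpha(\sigma)$; it is therefore forced to run monotonically alongside them, accumulating the displacement $n\cdot[1,m_\sigma]$, superimposed on the ``base'' displacement $[0,1]$ coming from the $n=0$ case where $\gamma_0=\rho$ contributes a single unit of rise and no horizontal travel. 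Thus the slope pair of the lift of $\alpha(\gamma_n)$ is $[0,1]+n[1,m_\sigma]=[n,1+nm_\sigma]$.

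From the slope pair $[n,1+nm_\sigma]$ the slope is $(1+nm_\sigma)/n = m_\sigma + 1/n$, which is the assertion. I would close by remarking that the formula is manifestly consistent with the cabling-construction constraint $n\neq 0$ (for $n=0$ one gets $\gamma_0=\rho$, which is not a cabling disk), and that the orientation and linking conventions enter only through the sign conventions already fixed in Proposition~\ref{prop:first_general_slope_calculation}, so nothing new needs to be checked there.

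The main obstacle I anticipate is making the ``cannot cross, hence runs parallel'' argument genuinely rigorous rather than merely picture-plausible: one must verify that among all cabling arcs for $\gamma_n$ the disjointness from $\alpha(\sigma)$ really pins down the homotopy class of the lift to be $[n,1+nm_\sigma]$ and not, say, $[n,1+nm_\sigma]$ shifted by a full lift of $\alpha(\sigma)$ — i.e.\ that the correct normalization of the base term is $[0,1]$ and not $[1,1+m_\sigma]$ or similar. This is settled by appealing to the $n=0$ endpoint ($\gamma_0=\rho$ has a known, zero-horizontal-travel cabling arc) together with the observation that increasing $n$ by one inserts exactly one half-twist and hence changes the lift by exactly one copy of the $\alpha(\sigma)$-segment displacement; the monotonicity forced by disjointness then makes the induction on $|n|$ go through.
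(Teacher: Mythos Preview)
Your proposal is correct and follows essentially the same approach as the paper: both arguments lift cabling arcs $\alpha(\sigma)$ and $\alpha(\gamma_n)$ to the covering space of $\Sigma$, use Proposition~\ref{prop:first_general_slope_calculation} to identify the lifts of $\alpha(\sigma)$ as segments of slope pair $[1,m_\sigma]$, invoke disjointness to force the lift of $\alpha(\gamma_n)$ to run parallel to these segments with one displacement per half-twist, and conclude that the slope pair is $[0,1]+n[1,m_\sigma]=[n,1+nm_\sigma]$. Your added remark about pinning down the base term $[0,1]$ via the $n=0$ case and inducting on $|n|$ is a reasonable way to make explicit what the paper leaves to the figure.
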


\section{Slopes for the splitting construction}
\label{sec:splitting_construction_slopes}

Corollary~\ref{coro:splitting_disk_slopes} and
Proposition~\ref{prop:second_general_slope_calculation} give immediately
the slopes of the four splitting constructions:
\begin{proposition}\label{prop:sigma_slopes}
For the torus knot $T_{p+r,q+s}$:
\begin{enumerate}
\item[(a)] A drop-$\lambda$ splitting has slope $2r(q+s)+1/n$.
\item[(b)] A lift-$\lambda$ splitting has slope $2s(p+r)+1/n$.
\item[(c)] A drop-$\rho$ splitting has slope $2p(q+s)+1/n$.
\item[(d)] A lift-$\rho$ splitting has slope $2q(p+r)+1/n$.
\end{enumerate}
\end{proposition}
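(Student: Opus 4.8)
The plan is to obtain Proposition~\ref{prop:sigma_slopes} as a direct specialization of the two general slope calculations, matching each of the four splitting disks to the configuration of Figure~\ref{fig:first_general}. The key observation is that in every version of the splitting construction, the disk $\gamma_n$ plays the role of $\sigma$ in Proposition~\ref{prop:second_general_slope_calculation} relative to a coordinate disk $(D,D^0)$ that is precisely $(\rho,\rho^0)$ or $(\lambda,\lambda^0)$, while the original splitting disk $\sigma$ of Section~\ref{sec:drop-lift} plays the role of ``$\sigma$'' in Proposition~\ref{prop:first_general_slope_calculation}. Thus Proposition~\ref{prop:second_general_slope_calculation} gives the slope of $\gamma_n$ as $m_\sigma + 1/n$, where $m_\sigma$ is the slope of the relevant drop- or lift-disk in the same coordinates, and Corollary~\ref{coro:splitting_disk_slopes} supplies the four values of $m_\sigma$.

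Concretely, I would argue case by case. For the drop-$\lambda$ splitting, the coordinate disk is $(\rho,\rho^0)$; by Corollary~\ref{coro:splitting_disk_slopes}(a) the drop-$\lambda$ disk has slope $2r(q+s)$, so by Proposition~\ref{prop:second_general_slope_calculation} the slope of $\gamma_n$ is $2r(q+s)+1/n$, giving (a). The lift-$\lambda$ case uses Corollary~\ref{coro:splitting_disk_slopes}(b), giving slope $2s(p+r)+1/n$; the drop-$\rho$ and lift-$\rho$ cases use $(\lambda,\lambda^0)$-coordinates together with Corollary~\ref{coro:splitting_disk_slopes}(c) and (d), yielding $2p(q+s)+1/n$ and $2q(p+r)+1/n$ respectively. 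The point needing care is the bookkeeping that identifies the ambient picture of Section~\ref{sec:drop-lift} (Figures~\ref{fig:drop-lambda}, \ref{fig:lift-lambda}, \ref{fig:gamma}) with the abstract setup of Figure~\ref{fig:first_general}(a): one must check that $K_\tau$ and $K_\lambda$ (resp.\ $K_\tau$ and $K_\rho$) are exactly the knots $K_U$ and $K_L$ on concentric torus levels, oriented left-to-right, and that the $1$-handle with cocore $\gamma_n$ is the vertical $1$-handle of that figure; this is exactly the content of the repositioning in Figure~\ref{fig:drop-lambda}(b) and the description in Section~\ref{sec:splitting}.

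The main (really the only) obstacle is verifying that the coordinate disk $(D,D^0)$ used in Propositions~\ref{prop:first_general_slope_calculation} and~\ref{prop:second_general_slope_calculation} genuinely coincides, for each of the four cases, with the standard $(\rho,\rho^0)$- or $(\lambda,\lambda^0)$-coordinate system of Section~\ref{sec:torus_knots} rather than merely agreeing up to a change of coordinates that would shift the slopes. For the drop- and lift-$\lambda$ splittings, $\gamma_n$ is a cabling construction replacing $\rho$ with principal pair $\{\lambda,\tau\}$ (as noted in Section~\ref{sec:splitting}), so its slope is naturally measured in $(\rho,\rho^0)$-coordinates, and one checks that the disk $D$ of Figure~\ref{fig:first_general}(a) is isotopic to $\rho$ while $D^0$ — the slope-$0$ disk splitting $H$ into two solid tori of linking number $0$ — is isotopic to $\rho^0$, using that $\rho^0$ is characterized (Figure~\ref{fig:drop-lambda_slope}(a) and the surrounding discussion) by exactly this property. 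The analogous check with $\rho$ and $\lambda$ interchanged handles the drop- and lift-$\rho$ cases. Once this identification is in place, the four formulas follow with no further computation, and I would simply state them as the immediate combination of the two cited results, case by case.
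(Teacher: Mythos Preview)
Your proof is correct and is exactly the paper's approach: the paper states the proposition as an immediate consequence of Corollary~\ref{coro:splitting_disk_slopes} and Proposition~\ref{prop:second_general_slope_calculation}, with no further argument. Your additional care in identifying the $(D,D^0)$-coordinates with the $(\rho,\rho^0)$- or $(\lambda,\lambda^0)$-coordinates just makes explicit what the paper leaves implicit in the setup of Sections~\ref{sec:first_general_slope_calculation} and~\ref{sec:second_general_slope_calculation}.
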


With the exception of a few phenomena described in the next theorem,
splitting constructions on nontrivial torus knots produce distinct tunnels.
\begin{theorem}\label{thm:coincidences}
Suppose that two splitting constructions on a nontrivial normalized torus
knot produce the same tunnel. Then both splittings are obtained from
the same torus knot, and either
\begin{enumerate}
\item[(a)] one is a drop-$\lambda$ splitting with $n=1$ and the other is a
lift-$\lambda$ splitting with $n=-1$, and the tunnel is the middle tunnel of
$T_{p+2r,q+2s}$, or
\item[(b)] one is a drop-$\rho$ splitting with $n=-1$ and the other is a
lift-$\rho$ splitting with $n=1$, and the tunnel is the middle tunnel of
$T_{2p+r,2q+s}$, or
\item[(c)] the knot in normalized form is $T_{2r+1,2}$, and
the splittings are either
\begin{enumerate}
\item[(i)] the lift-$\lambda$ and lift-$\rho$ splittings with the same value of~$n$, or
\item[(ii)] the lift-$\lambda$ splitting with $n=1$ and the drop-$\rho$ splitting with $n=-1$, or
\item[(iii)] the drop-$\lambda$ splitting with $n=1$ and the lift-$\rho$ splitting with $n=-1$.
\end{enumerate}
\end{enumerate}
\end{theorem}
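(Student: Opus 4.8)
The plan is to compare cabling sequences. By the classification of \cite{CMtree}, a tunnel is determined by its principal path, equivalently by its finite sequence of slope invariants together with its finite sequence of binary invariants. A splitting construction with $n\neq 0$ is a cabling construction applied to the middle tunnel $\tau$ of the underlying torus knot $T_{p+r,q+s}$, so the tunnel it produces has cabling sequence equal to that of $\tau$ with one more term appended: a slope given by Proposition~\ref{prop:sigma_slopes}, and a binary invariant recording which disk of the principal pair $\{\rho,\lambda\}$ of $\tau$ is replaced --- $\rho$ for the drop-$\lambda$ and lift-$\lambda$ splittings, $\lambda$ for the drop-$\rho$ and lift-$\rho$ splittings. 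I would first note that each splitting with $n\neq 0$ genuinely extends the principal path, since $\gamma_n\neq\rho$ (resp.\ $\neq\lambda$) and its slope $m_\sigma+1/n$ is finite, the only ``fold-back'' being the excluded disk $\gamma_0$.

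Now suppose two splittings produce the same tunnel $\mu$. Then their cabling sequences agree, so in particular the cabling sequences of the two underlying middle tunnels agree; since the middle tunnel of a normalized torus knot determines that torus knot, the two splittings are on the same $T_{p+r,q+s}$. (One also rules out that one middle-tunnel sequence is a one-step prefix of the other: such an extra step would be a $U$- or $L$-construction, forcing the offending splitting slope to equal $m_{\tau_U}$ or $m_{\tau_L}$, hence $\mu$ to be a torus-knot middle tunnel and not a strict extension --- impossible for a genuine cabling.) Thus both splittings are cablings of the same $\tau$ with the same appended term.

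If $\tau$ is not a simple tunnel --- which, reading off the construction in Section~\ref{sec:torus_knots}, happens precisely unless $(p+r)/(q+s)=[n_1,2]$, that is, unless $T_{p+r,q+s}=T_{2r+1,2}$ --- then the replaced disk and the binary invariant are well defined, so both splittings replace the same disk and have equal slope. Using $\det M_{p+r,q+s}=ps-qr=1$ one has $2s(p+r)=2r(q+s)+2$ and $2q(p+r)=2p(q+s)-2$, so equating the slopes of Proposition~\ref{prop:sigma_slopes}, together with the elementary fact that $|1/n_1-1/n_2|<2$ when $|n_1|,|n_2|\geq 2$, forces either $n_1=n_2$ (the same splitting) or the matchings of (a) and (b); Theorem~\ref{thm:torus_middle_tunnel_slopes} then identifies the common slope as $m_{\tau_U}$ or $m_{\tau_L}$ and $\mu$ as the middle tunnel of $T_{p+2r,q+2s}$ or $T_{2p+r,2q+s}$.

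The delicate case is $T_{p+r,q+s}=T_{2r+1,2}$, where $M_{2r+1,2}=\begin{pmatrix}r+1&1\\r&1\end{pmatrix}$, both $K_\rho$ and $K_\lambda$ are trivial, and $\tau$ is a simple tunnel. Here the principal pair of $\tau$ is not uniquely determined --- the first cabling in a cabling sequence carries only a slope, in $\Q/\Z$, and the two ``sides'' at a simple tunnel are interchanged by a symmetry of the $2$-bridge knot $T_{2r+1,2}$ --- so a cabling replacing $\rho$ may be identified with one replacing $\lambda$ when the slopes match. Since $q=s=1$ here, the lift-$\lambda$ and lift-$\rho$ splittings share the slope $2(2r+1)+1/n$, which gives (c)(i); combining (c)(i) with (b) and with (a) gives (c)(ii) and (c)(iii); and the same $|1/n_1-1/n_2|<2$ estimate shows these are the only further coincidences. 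The main obstacle is making this last case precise --- pinning down exactly how the cabling data at a simple tunnel fails to be unique, and the resulting identification of replace-$\rho$ with replace-$\lambda$ cablings --- for which I would rely on the analysis of simple tunnels and first cablings in \cite{CMtree} and \cite{CMsemisimple}. The converse, that the splittings listed in (a), (b), and (c) do coincide, follows from the same matching of cabling data.
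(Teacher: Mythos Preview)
Your proof is correct, and the core computations---equating the slopes from Proposition~\ref{prop:sigma_slopes}, using $ps-qr=1$ and the bound $|1/n_1-1/n_2|\leq 2$---match the paper's. The organization differs: you invoke the binary invariant to split off the case where $\tau$ is a simple tunnel (namely $T_{2r+1,2}$), arguing that otherwise the binary invariant forces both splittings to replace the same disk, so that only the $\lambda/\lambda$ and $\rho/\rho$ slope comparisons are needed. The paper instead runs through all six pairs of splitting types uniformly by slope comparison, and the fact that the cross-type coincidences force $q=s=1$ (hence $T_{2r+1,2}$) emerges from solving $2(p+r)(s-q)=1/n-1/m$ directly, using that $p,q,r,s>0$ for a nontrivial normalized torus knot. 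Your decomposition is more conceptual---it explains \emph{why} $T_{2r+1,2}$ is special---at the cost of having to justify carefully how the cabling data at a simple tunnel fails to distinguish the two disks of the principal pair, which you rightly flag as the main obstacle. The paper's approach sidesteps that discussion entirely, but makes the appearance of $T_{2r+1,2}$ look like a numerical accident.

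Two minor remarks. Your parenthetical about one middle-tunnel sequence being a one-step prefix of the other addresses a non-issue: the two extended cabling sequences, being equal, have the same length, so their length-minus-one truncations agree automatically. And when you write ``combining (c)(i) with (b) and with (a) gives (c)(ii) and (c)(iii)'', you should note that the slope identities yielding (a) and (b) are pure computations valid for every torus knot, including $T_{2r+1,2}$, so that the combination is legitimate even though you derived (a) and (b) under the non-simple hypothesis.
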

\noindent For the trivial normalized torus knots $T_{p,1}$, $p\geq 1$, one
can quickly work out the results of all possible splittings by using
Proposition~\ref{prop:sigma_slopes}. They are the simple tunnels having
slope invariant $[n/(2kn+1)]$, $k\geq 0$.

Before proving Theorem~\ref{thm:coincidences}, we identify the tunnels and
knots that arise from the multiple splittings that it classifies:
\begin{corollary}\label{coro:multiple_splittings}
The following are the tunnels that arise from distinct splittings on some
nontrivial torus knot:
\begin{enumerate}
\item[(a)] The middle tunnel of each normalized torus knot $T_{a,b}$ with
$b\geq 4$ arises from exactly two splittings:
\begin{enumerate}
\item[(i)] If the tunnel arises from a $U$-construction on $T_{p+r,q+s}$,
and hence is the middle tunnel of $T_{p+2r,q+2s}$, then it arises from
$T_{p+r,q+s}$ using either a drop-$\lambda$ splitting with $n=1$ or a
lift-$\lambda$ splitting with $n=-1$.
\item[(ii)] If the tunnel arises from an $L$-construction on $T_{p+r,q+s}$,
and hence is the middle tunnel of $T_{2p+r,2q+s}$, then it arises from
$T_{p+r,q+s}$ using either a drop-$\rho$ splitting with $n=-1$ or a
lift-$\rho$ splitting with $n=1$.
\end{enumerate}
\item[(b)] For each $r\geq 1$ and nonzero integer $n$ with $|n|\geq 2$,
there is a semisimple tunnel of a non-torus $3$-bridge knot that arises
from exacly two distinct splittings on $T_{2r+1,2}$: lift-$\lambda$ and
lift-$\rho$ splittings with the value~$n$. It has slope sequence
$[1/(2r+1)]$, $4n+2+1/n$.
\item[(c)] For each torus knot $T_{3r+1,3}$, $r\geq 1$,
the middle tunnel, which is semisimple, arises from three distinct
splittings on $T_{2r+1,2}$: lift-$\lambda$ and lift-$\rho$ splittings with
$n=1$, and a drop-$\rho$ splitting with $n=-1$.
\item[(d)] For each torus knot $T_{3r+2,3}$, $r\geq 1$,
the middle tunnel, which is semisimple, arises from three distinct
splittings on $T_{2r+1,2}$: lift-$\lambda$ and lift-$\rho$ splittings with
$n=-1$, and a drop-$\lambda$ splitting with $n=1$.
\end{enumerate}
\end{corollary}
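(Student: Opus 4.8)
The plan is to deduce the corollary from Theorem~\ref{thm:coincidences} by running through its three families of coincidences and, in each, identifying the resulting knot, computing the slope sequence of the resulting tunnel, and recording whether that tunnel is regular or semisimple. The counting statements are then automatic: if splittings $S_1,\dots,S_j$ all produce a common tunnel $\mu$, then applying Theorem~\ref{thm:coincidences} to each pair $(S_1,S_i)$ shows that all of them are splittings of one and the same torus knot, so the full set of splittings producing $\mu$ is exactly one of the configurations listed in Theorem~\ref{thm:coincidences}. What remains is to identify the knots and to compute the slopes.

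For part~(a) I would combine cases~(a) and~(b) of Theorem~\ref{thm:coincidences}. By those cases, a tunnel arising from two splittings on a nontrivial normalized torus knot, and not among the $T_{2r+1,2}$ coincidences, is the middle tunnel of $T_{p+2r,q+2s}$ (arising from a drop-$\lambda$ with $n=1$ and a lift-$\lambda$ with $n=-1$) or of $T_{2p+r,2q+s}$ (arising from a drop-$\rho$ with $n=-1$ and a lift-$\rho$ with $n=1$), and in either case from exactly that drop--lift pair; a third splitting producing it would, via Theorem~\ref{thm:coincidences}, have to complete a coincidence of a type these knots do not admit. Thus the torus-knot middle tunnels arising from two splittings on a nontrivial torus knot are exactly the middle tunnels of those normalized $T_{a,b}$ for which stripping a single $U$ or $L$ from the left of the associated matrix $M_{a,b}$ leaves the associated matrix of a \emph{nontrivial} normalized torus knot other than $T_{2m+1,2}$. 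A short computation with the continued-fraction description of Section~\ref{sec:torus_knots} shows that this is exactly the condition $b\ge4$: when $b\le3$ the matrix $M_{a,b}$ is (in normalized form) $L^m$, $UL^m$, $U^2L^m$, or $LUL^m$, and stripping a letter leaves either a trivial-knot matrix or $M_{2m+1,2}$, cases collected in parts~(b)--(d). The slope sequence in~(a) is then that of the middle tunnel of $T_{a,b}$ as computed in~\cite{CMtorus}.

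For parts~(b)--(d) I would work out case~(c) of Theorem~\ref{thm:coincidences} on $T_{2r+1,2}$ explicitly. Since $(2r+1)/2=[r,2]$ we have $M_{2r+1,2}=UL^{r}=\begin{pmatrix}r+1&1\\r&1\end{pmatrix}$, so $K_\rho=T_{r+1,1}$ and $K_\lambda=T_{r,1}$ are trivial, $T_{2r+1,2}$ is a $2$-bridge knot, and the first nontrivial cabling of its middle tunnel is a $U$-construction of slope $\diag M_{2r+1,2}=2r+1$, giving simple slope $[1/(2r+1)]$. Proposition~\ref{prop:sigma_slopes} then supplies the second (splitting) slope in each relevant case and makes the coincidences of Theorem~\ref{thm:coincidences}(c) visible: the lift-$\lambda$, lift-$\rho$ ($n=1$) and drop-$\rho$ ($n=-1$) splittings share one slope; the lift-$\lambda$, lift-$\rho$ ($n=-1$) and drop-$\lambda$ ($n=1$) splittings share another; and for $|n|\ge2$ only the lift-$\lambda$ and lift-$\rho$ splittings with parameter $n$ coincide, with the slope stated in~(b). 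This is the enumeration of~(b)--(d). For the identifications, note that since $K_\rho$ or $K_\lambda$ is unknotted, the splitting bands a copy of the trivial knot onto a second torus level: when $|n|=1$ this copy, carrying its single half-twist, can be reabsorbed into one Heegaard torus, and tracking homology classes together with the sign of the twist identifies the knot as $T_{3r+1,3}$ or $T_{3r+2,3}$ with its middle tunnel, which is semisimple by~\cite{CMtorus}; when $|n|\ge2$ the half-twists obstruct the reabsorption, the knot is a band sum of $T_{2r+1,2}$ with a trivial knot --- hence at most $3$-bridge --- and its tunnel has a length-two slope sequence with non-integer second term, matching no tunnel of a trivial knot, a $2$-bridge knot, or a torus knot in the classifications of~\cite{CMtorus} and~\cite{CMsemisimple}, so the knot is a non-torus $3$-bridge knot; the tunnel is semisimple because a single splitting cabling applied to the simple middle tunnel of the $2$-bridge knot $T_{2r+1,2}$ yields a semisimple rather than a regular tunnel.

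The step I expect to be the main obstacle is the identification in parts~(b)--(d): establishing that the $T_{2r+1,2}$-splittings produce precisely the named tunnels --- that the $|n|=1$ cases really recover the \emph{middle} tunnels of $T_{3r+1,3}$ and $T_{3r+2,3}$, that for $|n|\ge2$ the $3$-bridge knot is genuinely non-torus, and that these tunnels are semisimple rather than regular. The delicate feature is that for these small-bridge-number torus knots the ``naive'' middle-tunnel cabling sequence of Section~\ref{sec:torus_knots} need not coincide with the actual principal path, so one must appeal to the careful analysis of torus-knot tunnels and their invariant sequences in~\cite{CMtorus}, together with the bridge-number behavior of cabling constructions from~\cite{CMtree} and the description of semisimple tunnels in~\cite{CMsemisimple}. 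By contrast, the continued-fraction bookkeeping in part~(a) isolating $b\ge4$ is routine.
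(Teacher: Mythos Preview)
Your overall architecture---deduce everything from Theorem~\ref{thm:coincidences} and compute the second slope via Proposition~\ref{prop:sigma_slopes}---matches the paper. The paper's proof of part~(a) is in fact a single sentence (``Case~(a) just describes cases~(a) and~(b) of Theorem~\ref{thm:coincidences}''); your continued-fraction analysis isolating $b\ge 4$ is extra work the authors leave to the reader.

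Where you diverge is in parts~(b)--(d). The paper argues entirely through invariants: the tunnels are semisimple because the cabling sequence has length two; bridge number is at most~$3$ by Theorem~6.1 of~\cite{CMbridge} (length of cabling sequence bounds bridge number); the knot is not $2$-bridge because the second slope $4r+2+1/n$ is not of the $2$-bridge form $\pm 2+1/k$ \cite[Section~15]{CMtree}; for $|n|\ge 2$ it is not a torus knot because the second slope is non-integral \cite[Section~6]{CMtorus}; and for $|n|=1$ the resulting integral slope sequences $[1/(2r+1)],\,4r+1$ and $[1/(2r+1)],\,4r+3$ are fed into the algorithm of \cite[Section~6]{CMtorus} to read off $T_{3r+1,3}$ and $T_{3r+2,3}$. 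No geometry beyond the slope formulas is used.

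Your geometric route has a real gap at the bridge-number step. The assertion ``band sum of $T_{2r+1,2}$ with a trivial knot, hence at most $3$-bridge'' is not valid as stated: every knot is a band sum of two unknots, so band-summing with a trivial knot imposes no bridge bound by itself. You would need either to exhibit an explicit $3$-bridge position from the concentric-tori picture, or---much more simply---to invoke \cite{CMbridge} as the paper does. Your reabsorption argument for the $|n|=1$ identifications is plausible but, as you yourself flag, delicate; the paper bypasses it entirely by computing the slope sequence and matching against the \cite{CMtorus} classification, which is both shorter and what you end up pointing to anyway. I would replace the geometric identifications in~(b)--(d) with the invariant-matching argument.
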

\begin{proof}
Case~(a) just describes cases~(a) and~(b) of
Theorem~\ref{thm:coincidences}. In cases~(b), (c), and~(d), the
tunnels are semisimple since they result from only two cablings.  Also,
since the tunnels are constructed by cabling sequences of length~$2$,
Theorem~6.1 of~\cite{CMbridge} shows that the associated knots have bridge
number at most~$3$.

In Theorem~\ref{thm:coincidences}(c)(i),
Proposition~\ref{prop:sigma_slopes} finds the cabling sequences to be
$[1/(2r+1)]$, $4r+2+1/n$. The associated knots are not $2$-bridge since for
tunnels of $2$-bridge knots every slope invariant after the first is of the
form $\pm 2 + 1/n$, see \cite[Section~15]{CMtree}. For $n$ with $|n|>1$,
these give case~(b). Since the second slope invariant is not integral,
these are not torus knots~\cite[Section 6]{CMtorus}. Those with $|n|=1$
will appear in cases~(c) and~(d).

In Theorem~\ref{thm:coincidences}(c)(ii) and (c)(iii), the slope sequences
are respectively $[1/(2r+1)]$, $4r+1$ and $[1/(2r+1)]$, $4r+3$, and the
algorithm of \cite[Section 6]{CMtorus} identifies these as the middle
tunnels of the torus knots $T_{3r+1,3}$ and $T_{3r+2,3}$ respectively.
These give cases~(c) and~(d).
\end{proof}

\begin{proof}[Proof of Theorem~\ref{thm:coincidences}]
Consider a normalized torus knot $T_{p+r,q+s}$, for $p+r>q+s\geq 2$, with
associated matrix
\[M_{p+r,q+s}=(U\text{ or }L)^{n_k-1}\cdots U^{n_2}L^{n_1} =
\begin{pmatrix}p&q\\r&s\end{pmatrix}\ .\]
We recall from Section~\ref{sec:torus_knots} that the sequence of $U$- and
$L$-cablings producing the middle tunnel of $T_{p+r,q+s}$ is determined by
the positive integer continued fraction expansion $[n_1,\ldots,n_k]$ of
$(p+r)/(q+s)>1$.  Since this expansion is unique, apart from the ambiguity
that $[n_1,\ldots,n_k,1]=[n_1,\ldots,n_k+1]$, middle tunnels of nontrivial
torus knots have the same principal path only when they are the same
tunnel. Since the principal path of a splitting construction is a
continuation of the principal path of the middle tunnel on which it is
performed, splitting constructions on distinct middle tunnels of torus
knots cannot produce the same tunnel. So we need only consider a pair of
splitting constructions applied to the middle tunnel of the same normalized
nontrivial torus knot.

Consider first a lift-$\lambda$ splitting using $\gamma_m$ and a
drop-$\lambda$ splitting using $\gamma_n$ applied to $T_{p+r,q+s}$ to
produce the same tunnel. Equating the expressions for their slopes from
Proposition~\ref{prop:sigma_slopes}, we obtain $1/n-1/m=2ps - 2qr = 2$, so
$m=-1$ and $n=1$, giving case (a). Case~(b) is similar.

For a lift-$\lambda$ splitting using $\gamma_m$ and a lift-$\rho$ splitting
using $\gamma_n$, we obtain $2(p+r)(s-q)=1/n-1/m$. The right-hand side can
only be $-2$, $0$, or $2$. Since $T_{p+r,q+s}$ is nontrivial, $p$, $q$,
$r$, and $s$ are all positive, forcing the right-hand side to be $0$ and
hence $m=n$ and $q=s$. Since $ps-qr=1$, we have $q=s=1$ and $p=r+1$, so
$T_{p+r,q+s}=T_{2r+1,2}$. This is case (c)(i). Similar procedures lead to
cases~(c)(ii) and~(c)(iii) (although $ps-qr=1$ must be used earlier in the
calculations making the right-hand side $2+1/n-1/m$), and to no
possibilities for a drop-$\lambda$ and drop-$\rho$ pair.
\end{proof}

\section{The invariants of the Goda-Hayashi-Ishihara tunnel}
\label{sec:GHI_calculation}

For the Goda-Hayashi-Ishihara example described in
Section~\ref{sec:GHI_example}, we found the slope invariants of the first
two cablings to be $[2/3]$ and $-5$. We can now find the slope of the
middle tunnel produced by the lift-$\lambda$ splitting applied to
$T_{3,-4}$. We have
\[M_{3,-4}=L^2UM_{1,1}=\begin{pmatrix}1&-1\\2&-3\end{pmatrix}\ .\]
By Proposition~\ref{prop:sigma_slopes}(b),
the slope of the tunnel produced by
the lift-$\lambda$ splitting with $n=-1$ is $2(-3)(1+2)+1/(-1)=-19$.

\begin{figure}
\begin{center}
\labellist
\small\pinlabel $[2/3]$ at 52 136
\small\pinlabel $-5$ at 61 100
\small\pinlabel $-19$ at 88 65
\endlabellist
\includegraphics[width=0.24\textwidth]{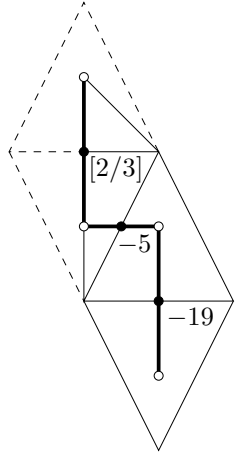}
\caption{The principal path of the Goda-Hayashi-Ishihara tunnel.}
\label{fig:GHI_path}
\end{center}
\end{figure}
The principal path of the tunnel is shown in Figure~\ref{fig:GHI_path}.
As noted in Section~\ref{sec:GHI_example}, the
first two nontrivial cablings in the cabling sequence
are the $L$-constructions that are the first
two steps where the path moves down and to the right.
The $L$-constructions
replace $\lambda$, and the lift-$\lambda$-splitting replaces $\rho$, so the path
turns downward for the $\lambda$-splitting. This proves that the tunnel is
not semisimple (as its principal pair does not contain a primitive disk, or
alternatively because its depth $2$ is greater than $1$). Summarizing, we have
\begin{theorem}\label{thm:GHI}
The Goda-Hayashi-Ishihara tunnel has slope invariant sequence $[2/3]$,
$-5$, $-19$, and binary invariant sequence $1$. It is a regular tunnel of
depth $2$ with the principal path shown in Figure~\ref{fig:GHI_path}.
\end{theorem}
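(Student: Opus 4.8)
The theorem is essentially a bookkeeping summary of computations already set up in Sections~\ref{sec:GHI_example} and~\ref{sec:GHI_calculation}, so the plan is to assemble the four assertions --- the slope sequence, the binary sequence, regularity, and the depth-with-principal-path claim --- from the pieces in hand. First I would recall from Section~\ref{sec:GHI_example} that the Goda-Hayashi-Ishihara knot arises from the middle tunnel of $T_{3,-4}$ by a lift-$\lambda$ splitting with $n=-1$, and that the cabling sequence producing it consists of: a trivial $U$-construction on $T_{1,-1}$, then an $L$-construction producing $T_{2,-3}$, then an $L$-construction producing the middle tunnel of $T_{3,-4}$, then the lift-$\lambda$ splitting. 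This identifies the principal path as a path of length~$3$ (the trivial first cabling contributing nothing to the nontrivial portion), which immediately gives depth~$2$.

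Next I would collect the three slope values. The slope of the first nontrivial cabling (the $L$-construction giving $T_{2,-3}$) was computed in Section~\ref{sec:GHI_example} via Theorem~\ref{thm:torus_middle_tunnel_slopes} to be $\diag(M_{2,-3})=-3$, so as the first cabling in the sequence its recorded invariant is the simple slope $[-1/3]=[2/3]$. The second nontrivial cabling (the $L$-construction giving $T_{3,-4}$) has slope $\diag(M_{3,-4})=-5$, again by Theorem~\ref{thm:torus_middle_tunnel_slopes}. The third cabling is the lift-$\lambda$ splitting with $n=-1$ on $T_{3,-4}=T_{p+r,q+s}$ where $M_{3,-4}=\left(\begin{smallmatrix}1&-1\\2&-3\end{smallmatrix}\right)$, so $(p,q,r,s)=(1,-1,2,-3)$; by Proposition~\ref{prop:sigma_slopes}(b) its slope is $2s(p+r)+1/n = 2(-3)(3)+1/(-1)=-19$. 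This yields the claimed slope sequence $[2/3],\,-5,\,-19$.

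For the binary invariant, I would argue as in Section~\ref{sec:GHI_example}: the two $L$-constructions both replace $\lambda$, so the principal path turns the same way at those two steps, while the lift-$\lambda$ splitting replaces $\rho$, so at the third step the path reverses direction. Hence the only nontrivial turn in the binary sequence occurs at the splitting step, giving binary invariant sequence consisting of a single~$1$; in particular the sequence is not identically trivial, so the tunnel is regular (not a $(1,1)$-tunnel). Equivalently, the tunnel is not semisimple because its principal pair $\{\lambda,\tau\}$ contains no primitive disk, or simply because its depth $2>1$. Finally the principal path is the explicit path traced out by this cabling sequence, matching Figure~\ref{fig:GHI_path}.

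The only genuine subtlety --- and the step I expect to need the most care --- is the bookkeeping at the \emph{first} nontrivial cabling: translating the geometric slope $-3$ of the $L$-construction into the recorded invariant $[2/3]$ via the ``simple slope is the reciprocal mod $\Q/\Z$'' convention, and making sure the trivial initial $U$-construction is correctly discarded so that the path has length $3$ and depth $2$ rather than being miscounted. Everything else is a direct substitution into Theorem~\ref{thm:torus_middle_tunnel_slopes} and Proposition~\ref{prop:sigma_slopes}, together with the replaced-disk rule for reading off the binary invariants.
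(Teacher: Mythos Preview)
Your proposal is essentially the paper's own argument: assemble the first two slopes from Section~\ref{sec:GHI_example} via Theorem~\ref{thm:torus_middle_tunnel_slopes}, compute the third slope from Proposition~\ref{prop:sigma_slopes}(b), and read off the binary invariant from which disk is replaced at each step. The computations and the regularity/principal-path conclusions match the paper exactly.

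One small correction: the sentence ``This identifies the principal path as a path of length~$3$ \ldots which immediately gives depth~$2$'' is not right as stated. The length of the cabling sequence does not by itself determine the depth; a semisimple tunnel with three nontrivial cablings still has depth~$1$. The depth here is~$2$ precisely because the binary invariant is~$1$ at the splitting step (equivalently, because the principal path turns there, so the principal pair no longer contains a primitive disk). You do say this correctly later, so just drop the premature ``immediately gives depth~$2$'' and defer the depth claim until after you have established the binary invariant.
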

\noindent Of course, for the other examples obtained by varying $n$, the
only difference is that the third slope is $-18+1/n$.

We can also carry out the calculation using the normalized description of
the mirror-image knot given in Section~\ref{sec:GHI_example}.  Start with
$T_{1,1}$ and perform an $L$-construction followed by two $U$-constructions
with slopes $[1/3]$ and $5$ to obtain $T_{4,3}$ with $M_{4,3}=U^2L=
\displaystyle\begin{pmatrix}3&2\\1&1\end{pmatrix}$. We have
$K_\rho=T_{3,2}$ and $K_\lambda=T_{1,1}$. Now, the drop-$\rho$ splitting
with $n=1$ gives the middle tunnel of the mirror image
Goda-Hayashi-Ishihara knot, and by Proposition~\ref{prop:sigma_slopes}(c),
its slope is $2\cdot 3\cdot (2+1)+1/1=19$.

\section{Upper and lower tunnels}
\label{sec:uuperlower}

The $(1,1)$-positions of the knots obtained by the splitting constructions
are readily described using the methods of~\cite{CMsemisimple}. This section
assumes a basic knowledge of that paper.

We will examine the drop-$\rho$ case, the others being very straightforward
modifications. Using the methodology of~\cite{CMsemisimple} and the
associated software~\cite{CMsoftware} that implements its algorithms, we
will find the slopes of the upper and lower tunnels of the
Goda-Hayashi-Ishihara knot.

Let $\omega(a,b)$ denote the braid word describing the torus knot
$T_{a,b}$, as given in~\cite[Section 11]{CMsemisimple}. Begin with a drop-$\rho$ disk dropped onto a horizontal
level, creating the setup picture in Figure~\ref{fig:first_general}(a). We
have $K_U=K_\tau=T_{p+r,q+s}$, with braid word description
$\omega(p+r,q+s)$, and $K_L=K_\rho=T(p,q)$, with braid word description
$\omega(p,q)$. The two vertical arcs are untwisted, and $K_\lambda$ is in
$(1,1)$-position described by the braid word
$\omega(p+r,q+s)\,\omega(p,q)^{-1}$. This equals $\omega(r,s)$ in the
(reduced) braid group $\B$, reflecting the fact that
$K_\lambda=T_{r,s}$. Replacing $\rho$ by $\gamma_n$ creates
$K_{\gamma_n}$, and the position is described by the braid word
$\omega(p+r,q+s)\,\sigma^n\,\omega(p,q)^{-1}$.
From this, the general algorithm in~\cite{CMsemisimple} gives the sequence
of slope invariants.

Let us do the calculations for the Goda-Hayashi-Ishihara examples. We will
use the normalized version, producing the mirror-image examples by the
drop-$\rho$ splitting applied to $T_{4,3}$. We have $K_\tau=T(4,3)$,
$K_\rho=T(3,2)$, and $K_\lambda=T(1,1)$. We compute $\omega(4,3)$ and $\omega(3,2)$:
\smallskip

\noindent\texttt{Semisimple> print fullTorusBraidWord(4,3)}\\
\texttt{l -1 m 1 l -1 m 1 l -2 m 1}
\smallskip

\noindent\texttt{Semisimple> print fullTorusBraidWord(3,2)}\\
\texttt{l -1 m 1 l -2 m 1}
\smallskip

\noindent The knot with position described by
$\omega(4,3)\,\omega(3,2)=\omega(7,5)$ is $T(7,5)$, while the one described
by $\omega(4,3)\omega(3,2)^{-1}=\omega(1,1)$ is the trivial knot
$K_\lambda=T(1,1)$ that would result from a drop-$\rho$ construction with
no twisting (that is, $n=0$). To confirm these, we compute:
\smallskip

\noindent\texttt{Semisimple> upperSlopes( 'l -1 m 1 l -1 m 1 l -2 m 1 s 0 l -1 m 1 l -2 m 1' )}\\
\texttt{[ 1/3 ], 5, 9, 11}
\smallskip

\noindent\texttt{Semisimple> torusUpperSlopes(7,5)}\\
\texttt{[ 1/3 ], 5, 9, 11}
\smallskip

\noindent while entering \texttt{upperSlopes( 'l -1 m 1 l -1 m 1 l -2 m 1 s
0 m -1 l 2 m -1 l 1')} produces empty output, indicating the trivial
knot. For the Goda-Hayashi-Ishihara knot, we insert $\sigma$ giving
\[\omega(4,3)\cdot \sigma \cdot \omega(3,2)^{-1}\]
as a braid word describing its $(1,1)$-position. We find
\smallskip

\noindent\texttt{Semisimple> upperSlopes( 'l -1 m 1 l -1 m 1 l -2 m 1 s 1 l -1 m 1 l -2 m 1' )}\\
\texttt{[ 1/3 ], 7, 9, 11}
\smallskip

\noindent\texttt{Semisimple> lowerSlopes( 'l -1 m 1 l -1 m 1 l -2 m 1 s 1 l -1 m 1 l -2 m 1' )}\\
\texttt{[ 1/3 ], 5, 7, 7, 9}
\smallskip

\noindent We know that using $n=-1$ would give $T(7,5)$, confirmed by
\smallskip

\noindent\texttt{Semisimple> upperSlopes( 'l -1 m 1 l -1 m 1 l -2 m 1 s -1 l -1 m 1 l -2 m 1' )}\\
\texttt{[ 1/3 ], 5, 9, 11}
\smallskip

\noindent We can also observe the effect of changing the number of twists
in the Goda-Hayashi-Ishihara example:
\smallskip

\noindent\texttt{Semisimple> upperSlopes( 'l -1 m 1 l -1 m 1 l -2 m 1 s 2 l -1 m 1 l -2 m 1' )}\\
\texttt{[ 1/3 ], 13/2, -3, -1}
\smallskip

\noindent\texttt{Semisimple> upperSlopes( 'l -1 m 1 l -1 m 1 l -2 m 1 s 3 l -1 m 1 l -2 m 1' )}\\
\texttt{[ 1/3 ], 19/3, 9, 11}
\smallskip

\noindent\texttt{Semisimple> upperSlopes( 'l -1 m 1 l -1 m 1 l -2 m 1 s 4 l -1 m 1 l -2 m 1' )}\\
\texttt{[ 1/3 ], 25/4, -3, -1}
\smallskip

\noindent\texttt{Semisimple> upperSlopes( 'l -1 m 1 l -1 m 1 l -2 m 1 s 5 l -1 m 1 l -2 m 1' )}\\
\texttt{[ 1/3 ], 31/5, 9, 11}
\smallskip

We do not know whether these knots have additional $(1,1)$-positions,
although it seems highly unlikely.

Braid word descriptions for the other three kinds of splittings are
obtained simply by using the appropriate knots for $K_U$ and~$K_L$.

\bibliographystyle{amsplain}

\end{document}